\newtheorem{assumption}[theorem]{Assumption}
\DeclareMathOperator{\R}{\mathbb{R}}
\newcommand{\dx}{\, \mathrm{d} x}
\newcommand{\dt}{\Delta t}
\DeclareMathOperator{\cfl}{CFL}
\def\Vs{{V_{\gamma}}}
\def\l2norm#1{\|#1\|_{L^2}} 
\def\h1norm#1{\|#1\|_{H_0^1}} 
\def\snorm#1{\|#1\|_{\gamma}} 
\def\sdnorm#1{\|#1\|_{V_{\gamma}'}}
\def\v#1{v^{(#1)}}
\def\u#1{u^{(#1)}}
\def\eps{\varepsilon}
\begin{document}

\title{An asymptotic preserving method for linear systems of balance laws based on Galerkin's method.}


\titlerunning{Asymptotic Preserving}        

\author{Jochen Sch\"utz}

\institute{J. Sch\"utz\at
           Institut f\"ur Geometrie und Praktische Mathematik, RWTH Aachen University \\ Templergraben 55, 52062 Aachen \\
              Tel.: +49 241 80 97677 \\
              \email{schuetz@igpm.rwth-aachen.de}           
}

\date{Received: date / Accepted: date}

\maketitle

\begin{abstract}
We apply the concept of Asymptotic Preserving (AP) schemes \cite{Jin99} to the linearized $p-$system and discretize the resulting elliptic equation using standard continuous Finite Elements instead of Finite Differences. The fully discrete method is analyzed with respect to consistency, and we compare it numerically with more traditional methods such as Implicit Euler's method.
\end{abstract}

\section{Introduction}
{Approximating solutions of singularly perturbed partial differential equations is a difficult task, see \cite{KC} for an introduction to such PDE.
Considering the Euler equations and the limit process of Mach number $Ma$ towards zero, it is known that these equations change type \cite{KlMa81}, and therefore constitute a singular limit. 
As a consequence, the $\cfl$-condition for explicit schemes prescribes an extremely small timestep $\Delta t$, with $\Delta t \rightarrow 0$ as $Ma \rightarrow 0$. One potential remedy is to use implicit time discretizations, however, it is known that they are overly diffusive and deteriorate the quality of the solution \cite{Kroener}. To this end, the concept of \emph{asymptotic preserving} (AP) schemes (in the context of compressible flows also called \emph{all speed} schemes) has been introduced, see, e.g., the review paper by Jin \cite{Jin2012} and the references therein. (For an excellent historical overview, we refer to \cite{CoDeKu12}.) In contrast to standard schemes, the temporal variable $t$ is discretized first, leaving the spatial variable $x$ continuous. 
Then, an additional equation is derived that is treated implicitly. It is only after this step that $x$ is discretized. In this work, we are interested in extending the concept from Finite-Volume-type to Galerkin-type discretizations. 

We consider the $p-$system \cite{DA} with a linear pressure function $p(v) := -\frac{1}{\varepsilon^2} v$ and a right-hand side $g$,
\begin{alignat}{2}
 \label{eq:p-system1}
 v_t - u_x &= 0 &\quad& \forall (x, t) \in \Omega \times \R^+ \\
 \label{eq:p_system2}
 u_t + p(v)_x &= g(x,t) &\quad& \forall (x, t) \in \Omega \times \R^+ 
 \end{alignat}
on a domain $\Omega \subset \R$ subject to suitable initial and boundary values, where for simplicity we choose the latter to be 
\begin{align}
  \label{eq:bdry_cond_v}
  v(x, t) = 0 \quad \forall (x, t) \in \partial \Omega \times \R^+. 
\end{align}
In a (simplified) physical application, $u$ and $v$ could denote velocity and (variations of) the specific volume of the fluid. 

Obviously, the equation can be written as 
\begin{alignat}{2}
 \label{eq:conslaw}
 w_t + f(w)_x &= G(x, t) &\quad& \forall (x, t) \in \Omega \times \R^+
\end{alignat}
for $w := (v, u)^T$, $f(w) := (-u, -\frac1{\varepsilon^2} v)^T$ and $G(x, t) := (0, g(x, t))^T$. 

The eigenvalues of the Jacobian of the flux function $f$ are $\pm\frac{1}{\varepsilon}$, and so a fully explicit Finite-Volume scheme will not be feasible for small values of $\varepsilon$, as the time-step will decrease with $\varepsilon$. 
Inspired by \emph{Asymptotic Preserving Schemes} (AP), we develop a new solver for \eqref{eq:p-system1}-\eqref{eq:p_system2} based on a combination of Finite Volumes and Finite Elements. Its (fully discrete) consistency is investigated, and it is compared with more traditional numerical schemes with respect to error versus mesh size. 
We put this in the simple framework of the $p-$system because it was on a similar system that Jin \cite{Jin99} derived his famous asymptotic preserving schemes for the first time, and because it is simple (but not too simple), so that each step can be easily computed, which is not the case for more involved systems such as Euler's equations. 

As already mentioned, the concept of asymptotic preserving schemes that we pursue in this publication has been introduced by Jin \cite{Jin99}, building on joint work with Pareschi and Toscani \cite{Jin1998}. 
In these publications, a scheme is called asymptotic preserving if
\begin{itemize}
 \item it is for $\varepsilon \rightarrow 0$ a consistent scheme for the multiscale limiting equations of \eqref{eq:p-system1}-\eqref{eq:p_system2} and
 \item is stable with a $\cfl$-number independent of $\varepsilon$. 
\end{itemize}
This class of schemes has since been extended to various kinds of equations, such as, e.g., Euler's equation \cite{ArNoLuMu12,CoDeKu12}, Shallow-water equations \cite{DeTa}, highly an\-iso\-tropic stationary elliptic equations \cite{DegLoNaNe12} and many more.

The current paper is a first attempt to extend the AP schemes for instationary problems, which have mostly been presented for Finite-Volume discretizations, to Galerkin-type schemes. Based on a flux-splitting, we derive an elliptic equation whose diffusion coefficient is dependent on $\varepsilon$ and $\Delta t$. This equation is solved by continuous Finite-Element methods, and not, as usual, by finite-difference schemes. The approach, though it can of course also be written in terms of finite differences, has the advantage that we can investigate the elliptic equation and its discretization in a rigorous setting in the context of Sobolev spaces. In a first step, we show that the elliptic equation is well-posed and uniformly well-conditioned for all values of $\varepsilon$ and $\Delta t$. This is achieved by introducing problem-dependent spaces and norms. In a second step, we restrict ourselves to 'small' $\varepsilon$ and 'large' $\Delta t$, i.e., $0 < \varepsilon \leq \varepsilon_0 < 1$ and $\Delta t \geq \
varepsilon$, as it is only in this 
setting that we can use standard Finite-Element schemes \cite{Ciarlet1978,GrRo} instead of stabilized ones \cite{BroHugh82}. Also for this setting, we can derive rigorous and uniform (in $\varepsilon$) stability and consistency bounds. 

Solutions to \eqref{eq:p-system1}-\eqref{eq:p_system2} that allow for a limit solution as $\varepsilon \rightarrow 0$ have a certain structure (see \eqref{eq:conseq_multiscale}-\eqref{eq:conseq_multiscale2} in Sec. 2). Our consistency analysis for the fully discrete algorithm heavily relies on this  structure, and we believe that it is only in this setting that one can derive suitable bounds on the consistency error that do not behave like $O(\varepsilon^{-1})$ or even worse. As an easy consequence, we can indeed show that the proposed scheme is AP. This is different to other authors \cite{DeTa,ArNo12} who show that their scheme is asymptotic preserving by a Taylor series argument on the semi-discrete stage. 

Having presented our scheme, we compare it numerically with two other schemes. The surprising outcome is that the scheme to be presented performs better by orders of magnitude in comparison to more traditional schemes. 

The outline of the paper is as follows: In Sec. \ref{sec:multiscalelimit}, we derive the multiscale limit solution of the linearized $p-$system for $\varepsilon \rightarrow 0$. In Sec. \ref{sec:fluxsplitting}, we split the conservative flux $f$ into a stiff $\widetilde f$ and a non-stiff $\widehat f$. Based on this splitting, we derive a semi-discretization in Sec. \ref{sec:semidiscretization}. This yields an elliptic equation, which is investigated in Sec. \ref{sec:elliptic_equation}. Finally, in Sec. \ref{sec:full_discr}, we formulate the fully discrete algorithm and investigate its consistency in Sec. \ref{sec:consistency}. In Sec. \ref{sec:numerics}, we show numerical results. Sec. \ref{sec:outlook} offers conclusions and outlook.}

\section{Asymptotic Preserving Discretization}
\subsection{Multiscale limit of the equation}\label{sec:multiscalelimit}
{In this section, we follow a multiscale approach to obtain the limiting equations of \eqref{eq:conslaw}. 
%
To this end, we assume that our unknown solution $(v, u)$ admits a two-scale expansion as
\begin{align}
 \label{eq:multiscale1}
 v &= v^{(0)} + \varepsilon v^{(1)} + \varepsilon^2 v^{(2)} + O(\varepsilon^3) \\
 \label{eq:multiscale2}
  u &= u^{(0)} + \varepsilon u^{(1)} + \varepsilon^2 u^{(2)} +  O(\varepsilon^3). 
\end{align}
Note that this approach does not include fast waves, i.e., contributions depending on $\frac{1}{\varepsilon}$, so one has a uniform limit as $\varepsilon \rightarrow 0$. As we are dealing with smooth solutions in this ansatz, we consider the $C^1(\Omega \times \R^+)$ topology, i.e., we consider the norm 
\begin{align}
 \|\varphi\|_{C^1} &:= \|\varphi\|_{\infty} + \|\nabla_{x, t} \varphi\|_{\infty},
\end{align}
and \eqref{eq:multiscale1}-\eqref{eq:multiscale2} have to be understood in the sense that 
\begin{alignat}{2}
 \label{eq:multiscale1_top}
  \|v &- v^{(0)} - \varepsilon v^{(1)} - \varepsilon^2 v^{(2)}\|_{C^1} &\ =\ & O(\varepsilon^3) \\
 \label{eq:multiscale2_top}
  \|u &- u^{(0)} - \varepsilon u^{(1)} - \varepsilon^2 u^{(2)} \|_{C^1} &\ =\ &  O(\varepsilon^3). 
\end{alignat}
With this rather strict notion of approximation, we can derive the limiting equations: 
Plugging \eqref{eq:multiscale1}-\eqref{eq:multiscale2} into \eqref{eq:p-system1}-\eqref{eq:p_system2} and balancing the powers of $\varepsilon$ yields that both $v^{(0)}(x, t)$ and $v^{(1)}(x, t)$ are independent of $x$. Therefore, $v^{(1)}(x, t)$ can be absorbed into $v^{(0)}(x, t)$, and \eqref{eq:multiscale1} reduces to 
\begin{align}
 v &= v^{(0)}(t) + \varepsilon^2 v^{(2)} + O(\varepsilon^3).
\end{align}
The remaining limiting equations can be easily seen to be
\begin{alignat}{2}
 \label{eq:p-system1limit}
 v^{(0)}_t - u^{(0)}_x &= 0 &\quad& \forall (x, t) \in \Omega \times \R^+ \\
 \label{eq:p_system2limit}
 u^{(0)}_t - v^{(2)}_x &= g(x,t) &\quad& \forall (x, t) \in \Omega \times \R^+ .
 \end{alignat}
A suitable algorithm approximating \eqref{eq:p-system1}-\eqref{eq:p_system2} for small values of $\varepsilon$ should, in the vanishing $\varepsilon-$limit, be a consistent approximation to \eqref{eq:p-system1limit}-\eqref{eq:p_system2limit}. In reference \cite{Jin99}, such a consistency requirement is called \emph{asymptotic preserving}. 

For a general conservation law, it is nontrivial to obtain more precise results concerning $v^{(0)}$ and $u^{(0)}$, see, e.g., \cite{KlMa81} for results in the context of Euler's equations. However, in the very simple setting of the linearized $p-$system, we can clarify even more the relation between $v$ and $u$:

\begin{lemma}
  A pair of smooth functions $(v, u)$ that admits a two-scale expansion as in \eqref{eq:multiscale1}-\eqref{eq:multiscale2} necessarily has the following form:
  \begin{align}
    \label{eq:conseq_multiscale}
    v(x, t) &= \varepsilon^2 v^{(2)}(x, t) + O(\eps^3) \\
    \label{eq:conseq_multiscale2}
    u(x,t)  &= u^{(0)}(t) + \varepsilon u^{(1)}(t)  + \varepsilon^2 u^{(2)}(x, t) + O(\varepsilon^3)
  \end{align}
   for functions $v^{(2)}, u^{(2)} : \R \times \R^+ \rightarrow \R$ and  $u^{(0)}, \u1 : \R^+ \rightarrow \R$.
\begin{proof}
 Plugging the multiscale expansion \eqref{eq:multiscale1}-\eqref{eq:multiscale2} into the conservation law \eqref{eq:p-system1}-\eqref{eq:p_system2}, one obtains 
 \begin{alignat}{3}
  \label{eq:eqms1}
  \v0_t + \eps \v1_t + \eps^2 \v2_t &- \u0_x - \eps \u1_x - \eps^2 \u2_x &=& \ &O(\eps^3)& \\
  \label{eq:eqms2}
  \u0_t + \eps \u1_t + \eps^2 \u2_t &- \eps^{-2} \v0_x - \eps^{-1} \v1_x - \v2_x \ &=& \ g \ + &O(\eps)&. 
 \end{alignat}
 Considering $O(\eps^{-2})$ and $O(\eps^{-1})$ parts of \eqref{eq:eqms2}, one obtains that both $\v0_x = \v1_x = 0$. Together with the boundary conditions \eqref{eq:bdry_cond_v} imposed on $v$, one can conclude that $\v0 = \v1 = 0$. This knowledge inserted into \eqref{eq:eqms1} and considering $O(1)$ and $O(\eps)$ terms, yields $\u0_x = \u1_x = 0$.
\end{proof}
\end{lemma}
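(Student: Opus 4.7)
The plan is to substitute the multiscale expansions \eqref{eq:multiscale1}-\eqref{eq:multiscale2} into the $p$-system \eqref{eq:p-system1}-\eqref{eq:p_system2} and match terms order by order in $\varepsilon$. The $C^1$-topology used in \eqref{eq:multiscale1_top}-\eqref{eq:multiscale2_top} lets me differentiate the remainders termwise in both $x$ and $t$, so each coefficient in the $\varepsilon$-expansion of the residual must vanish separately.

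My first step would focus on the singular equation \eqref{eq:p_system2}, whose stiff term $-\varepsilon^{-2} v_x$ generates contributions at orders $\varepsilon^{-2}$ and $\varepsilon^{-1}$. Matching these two orders forces $v^{(0)}_x \equiv 0$ and $v^{(1)}_x \equiv 0$, so that $v^{(0)}$ and $v^{(1)}$ depend only on $t$. The Dirichlet boundary condition \eqref{eq:bdry_cond_v} then pins these $x$-independent functions to zero for every $t$, yielding $v^{(0)} \equiv v^{(1)} \equiv 0$ and hence \eqref{eq:conseq_multiscale}.

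In a second step I would return to \eqref{eq:p-system1} with $v^{(0)} = v^{(1)} = 0$ already in hand. The $O(1)$ and $O(\varepsilon)$ balances of that equation then read $u^{(0)}_x = 0$ and $u^{(1)}_x = 0$, which is precisely the assertion \eqref{eq:conseq_multiscale2}. No serious obstacle appears: the argument is purely algebraic coefficient matching together with one invocation of the boundary data. The only subtlety worth flagging is that the boundary argument for $v^{(1)}$ requires the preceding identity $v^{(1)}_x = 0$ already; without it, the boundary condition would only control $v^{(1)}$ on $\partial\Omega$ rather than on all of $\Omega$.
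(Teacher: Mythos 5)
Your proposal is correct and follows exactly the paper's argument: balance the $O(\eps^{-2})$ and $O(\eps^{-1})$ terms of \eqref{eq:p_system2} to get $\v0_x = \v1_x = 0$, invoke the boundary condition \eqref{eq:bdry_cond_v} to conclude $\v0 = \v1 = 0$, and then read off $\u0_x = \u1_x = 0$ from the $O(1)$ and $O(\eps)$ balances of \eqref{eq:p-system1}. Your added remark that the boundary condition only controls $v^{(1)}$ on all of $\Omega$ because $v^{(1)}_x = 0$ is already known is a correct and worthwhile clarification of a step the paper passes over quickly.
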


\begin{remark}\label{rem:order_vux}
 Note that both $v(x,t)$ and $u_x(x,t)$ are of order $ O(\varepsilon^2)$. We will use this extensively when performing the consistency analysis of our algorithm. 
\end{remark}
}
\subsection{Flux Splitting}\label{sec:fluxsplitting}
{The way of splitting the flux into stiff and non-stiff parts has an influence on the final algorithm. We choose our splitting according to the following definition:
\begin{definition}\label{def:splitting}
 Let the flux function $f$ be split into $f(w) = \widehat f(w) + \widetilde f(w)$. We consider such a splitting to be admissible if for all $0 < \varepsilon < 1$
 \begin{itemize}
  \item both $\widehat f(w)$ and $\widetilde f(w)$ induce a hyperbolic system, i.e., the eigenvalues of both $\widehat f'(w)$ and $\widetilde f'(w)$ are distinct and real,
  \item the eigenvalues of $\widehat f'(w)$ are of order one, 
  \item $\widehat f(w)$ approaches $f(w)$ as $\varepsilon \rightarrow 1$, and
  \item $\widetilde f(w)$ approaches $f(w)$ for $\varepsilon \rightarrow 0$ in the sense that $\lim_{\varepsilon \rightarrow 0} \varepsilon^2 \left(\widetilde f(w) - f(w)\right) = 0.$
 \end{itemize}
 $\widehat f(w)$ is called the 'non-stiff', and $\widetilde f(w)$ the 'stiff' part of the flux function for obvious reasons. 
\end{definition}

To identify stiff and non-stiff parts of the flux function, we make the following ansatz: 
\begin{align}
  f(w) = \widehat f(w) + \widetilde f(w) =: 
		  \left( \begin{array}{c} -\alpha(\varepsilon) u \\ -\frac{\beta(\varepsilon)}{\varepsilon^2} v \end{array} \right)
		+ \left( \begin{array}{c} -(1-\alpha(\varepsilon)) u \\ -\frac{1-\beta(\varepsilon)}{\varepsilon^2} v \end{array} \right).
\end{align}
Both $\alpha(\cdot)$ and $\beta(\cdot)$ are yet unknown. One reasonable requirement is $\alpha(1) = \beta(1) = 1$, and $\alpha(0) = \beta(0) = 0$, so that one has no stiff contribution given that $\varepsilon$ is one, and no non-stiff contribution given that $\varepsilon$ vanishes. We make the simple ansatz of $\alpha(\varepsilon) = \varepsilon^a$, $\beta(\varepsilon) = \varepsilon^b$. An easy computation shows that for $a, b > 0$, $a + b = 2$, the eigenvalues of $\widehat f'(w)$ are independent of $\varepsilon$. A particularly simple choice is $a = b = 1$, which we will use throughout this work. In summary, for this choice of $a$ and $b$, we have
\begin{alignat}{3}
 \widehat f(w) &= \left( \begin{array}{c} -\varepsilon u \\ -\frac{1}{\varepsilon} v \end{array} \right), &\quad& 
 \widetilde f(w) &= \left( \begin{array}{c} -(1-\varepsilon) u \\ -\frac{1-\varepsilon}{\varepsilon^2} v \end{array} \right)
\end{alignat}
with corresponding eigenvalues of the Jacobians
\begin{align}
 \widehat \lambda &= \pm 1, \quad \widetilde \lambda = \pm \frac{1-\varepsilon}{\varepsilon}.
\end{align}
}
\subsection{Semi-Discretization}\label{sec:semidiscretization}
{We start the description of our algorithm with a discretization in time only. For simplicity, we assume that we work on space-time slabs of (uniform) size $\Delta t$, although uniformity is not a necessary condition. 
Throughout this work, we will use standard notation and set $w^n := w(t^n)$, where $t^n :=  n \Delta t$. 
Based on the flux splitting defined in Sec. \ref{sec:fluxsplitting}, we obtain a first-order implicit / explicit semidiscretization of $\eqref{eq:conslaw}$ in time, given by
\begin{alignat}{2}
 \label{eq:semidiscretization}
 \frac{w^{n+1}-w^n}{\dt} + \widehat f(w^n)_x + \widetilde f(w^{n+1})_x = G^n
\end{alignat}
or, in terms of $(v, u)$,
\begin{alignat}{2}
 \label{eq:imex1}
 \frac{v^{n+1}-v^n}{\dt} &= \varepsilon u_x^{n} + (1-\varepsilon)u_x^{n+1} \\
 \label{eq:imex2}
 \frac{u^{n+1}-u^n}{\dt} &= \frac{1}{\varepsilon} v_x^{n} + \frac{1-\varepsilon}{\varepsilon^2} v_x^{n+1} + g^n.
\end{alignat}
One way of dealing with such a system of implicit equations that has become a standard ingredient in asymptotic preserving schemes, is to equivalently reformulate \eqref{eq:imex1}-\eqref{eq:imex2} in such a way that one obtains an equation for either $v^{n+1}$ or $u^{n+1}$ alone. We have decided to formulate an equation for $v^{n+1}$. To this end, we note that \eqref{eq:imex2} is equivalent to
\begin{alignat}{2}
\label{eq:u_update}
 u^{n+1} = u^n + \dt \left(\frac{1}{\varepsilon} v_x^{n} + \frac{1-\varepsilon}{\varepsilon^2} v_x^{n+1} + g^n \right), 
\end{alignat}
and plug this into \eqref{eq:imex1}: 
\begin{alignat}{2}
   v^{n+1} &= v^n + \dt \left( \varepsilon u_x^{n} + (1-\varepsilon) \left( u^n + \dt \left(\frac{1}{\varepsilon} v_x^{n} + \frac{1-\varepsilon}{\varepsilon^2} v_x^{n+1} + g^n \right)  \right)_x \right) \\
 &= v^n + \dt \ u_x^n + \dt^2 (1-\varepsilon) \left(\frac{1}{\varepsilon} v_{xx}^{n} + \frac{1-\varepsilon}{\varepsilon^2} v_{xx}^{n+1} + g_x^n \right) \\
 &= v^n + \dt \ u_x^n + \frac{\dt^2(1-\varepsilon)}{\varepsilon}v_{xx}^{n} +  \frac{\dt^2 (1-\varepsilon)^2}{\varepsilon^2} v_{xx}^{n+1} + \dt^2(1-\varepsilon) g_x^n.
\end{alignat}
Rearranging terms yields an elliptic equation for $v^{n+1}$: 
\begin{alignat}{2}
 \label{eq:elliptic_v}
 -\frac{\dt^2 (1-\varepsilon)^2}{\varepsilon^2} v_{xx}^{n+1} + v^{n+1} &= v^n + \dt \ u_x^n + \frac{\dt^2(1-\varepsilon)}{\varepsilon}v_{xx}^{n} + \dt^2(1-\varepsilon) g_x^n. 
\end{alignat}
\begin{remark}
 \eqref{eq:elliptic_v} is a well-posed equation for $\Delta t > 0$ and $0 < \varepsilon < 1$, as the diffusion coefficient $\gamma := \frac{\dt^2 (1-\varepsilon)^2}{\varepsilon^2}$ is strictly positive. However, $\gamma$ is only bounded away from zero for $\Delta t \gg 0$ and $\varepsilon \ll 1$, so one cannot expect to get uniform stability bounds in the $H^1-$norm. 
 Nevertheless, it is possible to obtain uniform bounds in a $\gamma-$dependent norm, see Sec. \ref{sec:elliptic_equation}. 
\end{remark}

The weak formulation of \eqref{eq:elliptic_v} can be cast in a variational framework as 
\begin{align}
 \label{eq:op_poisson}
 a(v^{n+1}, \varphi) = \iota(\varphi) &\quad \forall \varphi \in H_0^1(\Omega), 
\end{align}
where 
\begin{align}
 \label{eq:bilinearform}
 a(v^{n+1}, \varphi) &:= \int_{\Omega} \left(\frac{\dt^2 (1-\varepsilon)^2}{\varepsilon^2} v^{n+1}_{x}  \varphi_x + v^{n+1} \varphi \right) \dx \quad \text{   and } \\ 
 \label{eq:iota}
 \iota(\varphi) &:= \int_{\Omega} \left( v^n + \dt \ u_x^n\right) \varphi - \dt^2(1-\varepsilon)\left(\frac{v_{x}^{n}}\varepsilon  + g^n\right) \varphi_x \dx.
\end{align}
Boundedness and coercivity properties of $a(\cdot, \cdot)$ will be discussed in the next sections. What concerns $\iota$, we can state the following lemma:
\begin{lemma}
 Let us assume that $u^n \equiv u(t^n)$ and $v^n \equiv v(t^n)$ are functions in $H^1(\Omega)$; $g^n \equiv g(t^n)$ is a function in $L^2(\Omega)$;  and $0 < \varepsilon < 1$. Then $\iota \in H_0^1(\Omega)'$. 
\begin{proof}
 It is enough to show that both $v^n + \dt \ u_x^n$ and $ \dt^2(1-\varepsilon)\left(\frac{v_{x}^{n}}\varepsilon  + g^n\right)$ are functions in $L^2(\Omega)$, which is correct because of the assumptions on $u^n, v^n$ and $g^n$. 
\end{proof}
 \end{lemma}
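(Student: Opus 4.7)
The statement is essentially a bookkeeping lemma: the linear functional $\iota$ splits into a piece that pairs against $\varphi$ and a piece that pairs against $\varphi_x$, and under the stated regularity each of those pieces is defined by an $L^2$ density, so Cauchy--Schwarz gives a bound by $\|\varphi\|_{H_0^1}$. The plan is therefore to verify linearity, then verify the two $L^2$-regularities, then assemble the estimate.

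First I would write $\iota(\varphi) = \int_\Omega h_1 \varphi \dx + \int_\Omega h_2 \varphi_x \dx$ with
\begin{align*}
 h_1 &:= v^n + \dt\, u_x^n, \\
 h_2 &:= -\dt^2(1-\varepsilon)\left(\tfrac{1}{\varepsilon} v_x^n + g^n\right),
\end{align*}
and note that linearity in $\varphi$ is immediate from linearity of the integral and of the weak derivative. Next, since $v^n, u^n \in H^1(\Omega)$ we have $v^n, u_x^n \in L^2(\Omega)$, so $h_1 \in L^2(\Omega)$ by the triangle inequality; similarly $v_x^n \in L^2(\Omega)$ and $g^n \in L^2(\Omega)$ by assumption, and since $\varepsilon$ is a fixed positive constant the prefactor $\dt^2(1-\varepsilon)/\varepsilon$ is finite, so $h_2 \in L^2(\Omega)$.

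With both densities in $L^2(\Omega)$, Cauchy--Schwarz gives
\begin{align*}
 |\iota(\varphi)| &\leq \|h_1\|_{L^2}\|\varphi\|_{L^2} + \|h_2\|_{L^2}\|\varphi_x\|_{L^2} \\
 &\leq \bigl(\|h_1\|_{L^2} + \|h_2\|_{L^2}\bigr)\|\varphi\|_{H_0^1},
\end{align*}
so $\iota$ is a bounded linear functional on $H_0^1(\Omega)$, i.e. $\iota \in H_0^1(\Omega)'$.

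There is no real obstacle here; the only subtle point worth flagging is that the $\varepsilon$-dependence in the operator norm of $\iota$ is allowed to blow up as $\varepsilon \to 0$ (through the $1/\varepsilon$ factor in $h_2$ and through the implicit $\varepsilon$-dependence of $v^n, u^n, g^n$), so this lemma only establishes membership in $H_0^1(\Omega)'$ for each fixed $\varepsilon \in (0,1)$ and makes no uniformity claim. Uniform bounds will require the structural information about $v^n$ and $u_x^n$ coming from Remark~\ref{rem:order_vux}, but that is deferred to the consistency analysis.
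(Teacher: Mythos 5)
Your proof is correct and follows essentially the same route as the paper's: decompose $\iota$ into a density paired with $\varphi$ and a density paired with $\varphi_x$, check both densities lie in $L^2(\Omega)$ using the assumed regularity of $u^n, v^n, g^n$ and the fact that $\varepsilon>0$ is fixed, and conclude boundedness via Cauchy--Schwarz (the paper leaves this last step implicit). Your closing remark about the non-uniformity in $\varepsilon$ is a correct and worthwhile observation, though not needed for the lemma itself.
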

}
\subsection{A note on the elliptic equation}\label{sec:elliptic_equation}
{Let us now turn to the variational equation \eqref{eq:op_poisson}. To make it a well-defined and a uniformly well-conditioned problem for all $0 < \varepsilon \leq 1$, we put it in a variational framework with weighted Sobolev spaces as follows:
\begin{definition}
 Let the coefficient of the viscous term of \eqref{eq:op_poisson} be denoted by $\gamma$, i.e., 
 \begin{align}
  \gamma := \frac{\dt^2 (1-\varepsilon)^2}{\varepsilon^2}.
 \end{align}
 We define a weighted norm $\snorm\cdot$ as
 \begin{align}
  \snorm{\varphi}^2 := \l2norm{\varphi}^2 + \gamma \l2norm{\varphi_x}^2
 \end{align}
 and a corresponding 'Sobolev-space' 
 \begin{align}
  \Vs(\Omega) := \overline{C_0^{\infty}(\Omega)}^{\snorm{\cdot}}.
\end{align}
\end{definition}
\begin{corollary}
 For $\gamma >0$, i.e., $\varepsilon < 1$, the weighted norm $\snorm{\cdot}$ is equivalent to the standard Sobolev norm, as can be seen from a Poincar\'e-Friedrichs inequality. However, the equivalence constants get worse as $\varepsilon$ approaches one. With this equivalence in mind, it is easy to see that 
 \begin{align}
  \Vs(\Omega) = \begin{cases} H_0^1(\Omega), &\quad \varepsilon < 1 \\ L^2(\Omega), &\quad \varepsilon = 1 \end{cases},
 \end{align}
 as $\gamma = 0$ for $\varepsilon = 1$ and $\gamma> 0$ for $0 < \varepsilon < 1$. 
\end{corollary}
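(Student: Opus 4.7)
The plan is to establish a two-sided comparison between $\snorm{\cdot}$ and $\h1norm{\cdot}$ on the dense subspace $C_0^\infty(\Omega)$, then transfer this equivalence to the respective completions.

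\textbf{Step 1 (upper bound).} For any $\varphi \in C_0^\infty(\Omega)$ the definition of $\snorm{\cdot}$ immediately yields
\begin{align*}
\snorm{\varphi}^2 = \l2norm{\varphi}^2 + \gamma \l2norm{\varphi_x}^2 \leq \max(1,\gamma)\, \h1norm{\varphi}^2,
\end{align*}
with a constant that stays bounded as $\varepsilon \to 1$ (since $\gamma \to 0$).

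\textbf{Step 2 (lower bound).} Because $\Omega \subset \R$ is bounded---this is implicitly required by the Dirichlet condition \eqref{eq:bdry_cond_v}---the Poincar\'e--Friedrichs inequality supplies a constant $C_P = C_P(\Omega)$ with $\l2norm{\varphi} \leq C_P \l2norm{\varphi_x}$. Together with the definition of $\snorm{\cdot}$ this gives
\begin{align*}
\h1norm{\varphi}^2 \leq (1+C_P^2)\, \l2norm{\varphi_x}^2 \leq \frac{1+C_P^2}{\gamma}\, \snorm{\varphi}^2,
\end{align*}
which uses $\gamma > 0$, i.e.\ $\varepsilon < 1$. The factor $(1+C_P^2)/\gamma$ blows up as $\varepsilon \to 1$, exactly reflecting the remark in the statement that the equivalence constants deteriorate near $\varepsilon = 1$.

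\textbf{Step 3 (identification of $V_\gamma$).} For $\varepsilon < 1$, Steps~1 and 2 show that $\snorm{\cdot}$ and $\h1norm{\cdot}$ induce the same Cauchy sequences on $C_0^\infty(\Omega)$, so their completions are isomorphic Banach spaces and $\Vs(\Omega) = H_0^1(\Omega)$. For $\varepsilon = 1$ one has $\gamma = 0$, hence $\snorm{\varphi} = \l2norm{\varphi}$, and by the standard density of $C_0^\infty(\Omega)$ in $L^2(\Omega)$ the completion equals $L^2(\Omega)$.

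I do not expect any serious obstacle: the only nontrivial ingredient is Poincar\'e--Friedrichs on a bounded one-dimensional $\Omega$, and the $\varepsilon$-dependence of the equivalence constants is fully dictated by $\gamma = \dt^2(1-\varepsilon)^2/\varepsilon^2$. The mild subtlety is that the lower-bound constant degrades like $1/\gamma$ as $\varepsilon \to 1$, which explains precisely why the case $\varepsilon = 1$ must be singled out in the conclusion.
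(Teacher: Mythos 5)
Your proposal is correct and follows essentially the same route the paper sketches inside the corollary itself: a two-sided norm comparison via the Poincar\'e--Friedrichs inequality (with the lower-bound constant degrading like $1/\gamma$ as $\varepsilon \to 1$), followed by the observation that equivalent norms yield the same completion of $C_0^\infty(\Omega)$, while for $\varepsilon=1$ the weighted norm collapses to the $L^2$-norm. No gaps; the details you fill in are exactly the ones the paper leaves implicit.
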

\begin{remark}
 The weighted norm $\snorm{\cdot}$ is the energy norm associated to \eqref{eq:op_poisson}, i.e., 
 \begin{align}
    \label{eq:norm_skalarprodukt}
    \snorm{\varphi}^2 = a(\varphi, \varphi).
 \end{align}
 Furthermore, for $\gamma = 0$, the problem \eqref{eq:op_poisson} is not well-posed in $H_0^1(\Omega)$ any more, so the choice of $\Vs(\Omega)$ is actually very natural.
\end{remark}

The following lemma computes both coercivity and boundedness constants of $a(\cdot, \cdot)$ on $\Vs(\Omega)$:
\begin{lemma}
 The bilinear form $a(\cdot, \cdot)$ as defined in \eqref{eq:bilinearform} is coercive on $\Vs(\Omega)\times\Vs(\Omega)$ with ellipticity constant one, and bounded on $\Vs(\Omega)\times\Vs(\Omega)$ with boundedness constant also one. 
 \begin{proof}
  It is easy to see that
  \begin{align}
   a(\varphi, \varphi) = \gamma \l2norm{\varphi_x}^2 + \l2norm{\varphi}^2 = \snorm{\varphi}^2,
  \end{align}
  so the bilinear form is elliptic with ellipticity constant one. 
  Furthermore, using Cauchy-Schwartz inequality (this is possible because of \eqref{eq:norm_skalarprodukt}), one has
  \begin{align}
   a(\varphi, \psi) \leq \snorm{\varphi} \snorm{\psi}.
  \end{align}
 \end{proof}
\end{lemma}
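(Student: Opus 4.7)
The plan is to dispatch the two assertions separately, with coercivity reducing to a direct computation and boundedness following from the observation that $a(\cdot,\cdot)$ is itself an inner product on $V_\gamma(\Omega)$.

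For coercivity, I would simply substitute $\psi = \varphi$ into the definition of $a$ in \eqref{eq:bilinearform}, giving
\begin{align*}
  a(\varphi,\varphi) = \int_\Omega \bigl( \gamma\, \varphi_x^2 + \varphi^2 \bigr)\, dx = \gamma \l2norm{\varphi_x}^2 + \l2norm{\varphi}^2,
\end{align*}
which is exactly $\snorm{\varphi}^2$ by definition of the weighted norm. Hence the ellipticity constant is one; this is just equation \eqref{eq:norm_skalarprodukt} restated.

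For boundedness, I would exploit that $\gamma>0$ (since $0<\varepsilon<1$), so $a$ is symmetric and positive definite and therefore defines an inner product on $V_\gamma(\Omega)$ whose induced norm is precisely $\snorm{\cdot}$ by the coercivity identity above. The Cauchy--Schwarz inequality for this inner product then gives
\begin{align*}
  a(\varphi,\psi) \le \sqrt{a(\varphi,\varphi)}\,\sqrt{a(\psi,\psi)} = \snorm{\varphi}\,\snorm{\psi},
\end{align*}
so the boundedness constant is also one. Alternatively, and perhaps more transparently, one may apply Cauchy--Schwarz term by term in $L^2$, writing the viscous part as $\int_\Omega (\sqrt{\gamma}\varphi_x)(\sqrt{\gamma}\psi_x)\,dx \le \sqrt{\gamma}\l2norm{\varphi_x}\cdot\sqrt{\gamma}\l2norm{\psi_x}$ and the mass part as $\int_\Omega \varphi\psi \le \l2norm{\varphi}\l2norm{\psi}$, then invoking Cauchy--Schwarz in $\R^2$ on the two pairs $(\l2norm{\varphi},\sqrt{\gamma}\l2norm{\varphi_x})$ and $(\l2norm{\psi},\sqrt{\gamma}\l2norm{\psi_x})$ to recombine the bound into $\snorm{\varphi}\snorm{\psi}$.

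There is no real obstacle here; the only subtlety worth noting is that the argument requires $\gamma>0$ (so that $a$ is genuinely positive definite and $\snorm{\cdot}$ is a norm rather than a seminorm), which is guaranteed by the standing assumption $0 < \varepsilon < 1$ and $\Delta t > 0$. The key conceptual point the proof should make explicit is that the weighted norm $\snorm{\cdot}$ has been designed precisely so that both constants become one, which is what makes the elliptic problem uniformly well-conditioned across all admissible values of $\varepsilon$ and $\Delta t$.
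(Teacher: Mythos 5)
Your proof is correct and follows essentially the same route as the paper: coercivity by direct substitution showing $a(\varphi,\varphi)=\snorm{\varphi}^2$, and boundedness via the Cauchy--Schwarz inequality for the inner product induced by $a$, which is exactly how the paper invokes \eqref{eq:norm_skalarprodukt}. Your alternative term-by-term estimate and the remark on $\gamma>0$ are fine additions but do not change the substance of the argument.
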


A problem is called \emph{well-conditioned}, if the relative error in the output is bounded by a constant times the relative error in the input. In the current case, input is two functionals $\iota, \widetilde \iota \in \Vs(\Omega)'$, and output is two corresponding solutions $v,\widetilde v$ to the elliptic equation \eqref{eq:op_poisson}, so well-conditioned means that there is a constant $C \in \R$, such that 
 \begin{align}
  \frac{\snorm{v - \widetilde v}}{\snorm{v}} \leq C \frac{\sdnorm{\iota - \widetilde \iota}}{\sdnorm{\iota}}.
 \end{align}
The following theorem guarantees that \eqref{eq:op_poisson} is, for the full range of $0 < \varepsilon \leq 1$, a well-conditioned problem with $C \equiv 1$:
\begin{theorem}\label{thm:konditionszahl_cont}
 The equation \eqref{eq:op_poisson} is well-conditioned in $\Vs(\Omega)$ independently of $\varepsilon$, i.e., for two functionals $\iota, \widetilde \iota \in \Vs(\Omega)'$, and their corresponding solutions $v$ and $\widetilde v$, one has the relation
 \begin{align}
  \label{eq:konditionszahl_cont}
  \frac{\snorm{v - \widetilde v}}{\snorm{v}} \leq \frac{\sdnorm{\iota - \widetilde \iota}}{\sdnorm{\iota}}.
 \end{align}
\begin{proof}
It is a classical result from the theory of elliptic pde that the quotient of boundedness constant and ellipticity constant is indeed the condition number with respect to a perturbation of the functional $\iota$. Nevertheless, for convenience, we give a sketch of the proof.
From ellipticity, we can conclude
\begin{align}
 \label{eq:elliptic_ineq_cont}
  \snorm{v - \widetilde v}^2 = a(v - \widetilde v, v - \widetilde v) = \iota(v-\widetilde v) - \widetilde \iota (v - \widetilde v) \leq \sdnorm{\iota - \widetilde \iota} \snorm{v - \widetilde v}
\end{align}
and from boundedness
\begin{align}
 \label{eq:bounded_ineq_cont}
 \sdnorm{\iota} = \sup_{u\in \Vs(\Omega), \snorm{u} = 1} \iota(u) = \sup_{u\in \Vs(\Omega), \snorm{u} = 1} a(v, u) \leq \snorm{v}. 
\end{align}
\eqref{eq:elliptic_ineq_cont}-\eqref{eq:bounded_ineq_cont} yields \eqref{eq:konditionszahl_cont}.
\end{proof}
 \end{theorem}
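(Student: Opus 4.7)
My plan is to derive the bound directly from the ellipticity and boundedness constants established in the preceding lemma, both of which equal one. Because the condition number in standard elliptic theory is controlled by the ratio of these constants, the ratio being exactly one will immediately give the sharp bound with constant $C = 1$.

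First I would use coercivity on the difference $v - \widetilde v$. Since both $v$ and $\widetilde v$ satisfy the variational equation with right-hand sides $\iota$ and $\widetilde\iota$ respectively, subtracting gives $a(v - \widetilde v, \varphi) = (\iota - \widetilde\iota)(\varphi)$ for all $\varphi \in \Vs(\Omega)$. Testing against $\varphi = v - \widetilde v$ and invoking the ellipticity $a(\psi,\psi) = \snorm{\psi}^2$ yields
\begin{align}
\snorm{v - \widetilde v}^2 \le \sdnorm{\iota - \widetilde \iota}\, \snorm{v - \widetilde v},
\end{align}
after bounding the dual pairing by the operator norm. Dividing by $\snorm{v-\widetilde v}$ gives the numerator bound.

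Next I would bound $\sdnorm{\iota}$ from above by $\snorm{v}$ using boundedness of $a(\cdot,\cdot)$. By the definition of the dual norm and the variational equation,
\begin{align}
\sdnorm{\iota} = \sup_{\snorm{u} = 1} \iota(u) = \sup_{\snorm{u} = 1} a(v, u) \le \snorm{v},
\end{align}
since the boundedness constant of $a$ on $\Vs(\Omega) \times \Vs(\Omega)$ is one. Taking reciprocals and multiplying by the numerator bound produces the claimed inequality.

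There is essentially no serious obstacle here; the only subtlety is to observe that the Cauchy–Schwarz step for $a(\cdot,\cdot)$ relies on the identity $\snorm{\varphi}^2 = a(\varphi,\varphi)$, which means $a$ is actually an inner product on $\Vs(\Omega)$, so Cauchy–Schwarz applies exactly. This is what allows the boundedness constant to be precisely one rather than some larger value, and therefore what makes the condition number equal to one as opposed to merely bounded independently of $\varepsilon$. I would make sure to flag this identification of $a$ as an inner product, since it is the structural reason the estimate is sharp.
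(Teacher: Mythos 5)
Your proposal is correct and follows essentially the same route as the paper's proof: coercivity applied to $v-\widetilde v$ tested against itself gives the numerator bound, and boundedness (with constant one, via the inner-product structure $a(\varphi,\varphi)=\snorm{\varphi}^2$ and Cauchy--Schwarz) gives $\sdnorm{\iota}\leq\snorm{v}$, after which the two estimates combine. Your explicit remark that $a(\cdot,\cdot)$ is an inner product on $\Vs(\Omega)$ is a nice clarification of why the constant is exactly one, but the argument itself is the same as in the paper.
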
}
\subsection{Full discretization}\label{sec:full_discr}
{
In this section, we introduce the fully discrete method. To this end, we assume that our spatial domain $\Omega$ is subdivided into cells $\Omega_i$ as 
\begin{align}
 \Omega = \bigcup_{i=1}^{N_x} \Omega_i :=\bigcup_{i=1}^{N_x} [x_i, x_{i+1}]
\end{align}
with midpoints 
\begin{align}
\overline x_i := \frac{x_{i+1} + x_i}{2}. 
\end{align}
For simplicity, we consider a uniform discretization, i.e., 
\begin{align}
 \Delta x := x_{i+1}-x_i
\end{align}
is assumed to be constant. This, however, is only for the ease of presentation, there is no need to have uniform cells. 

In a Finite-Volume fashion, we define approximations $w_i^n \equiv (v_i^n, u_i^n)$ to the quantities $w(\overline x_i, t^n)$ to be piecewise constants. At $t = 0$, we initialize 
\begin{align}
 w_i^0 := w_0(\overline x_i, 0) \quad \forall \ i = 1, \ldots, N_x
\end{align}
for given initial values $w_0: \R \rightarrow \R$ to the conservation law \eqref{eq:conslaw}. 

The overall algorithm relies on the following steps:
\begin{enumerate}
 \item Compute an approximate solution $v$ to \eqref{eq:op_poisson} with (linear) Finite-Elements. 
 \item Update $u$ motivated by \eqref{eq:u_update}.
\end{enumerate}

Let us discuss these  steps separately: Obviously, the variational equation \eqref{eq:op_poisson} can not be solved exactly, because $\iota(\varphi)$ is not available, and one cannot solve the variational equation exactly either. So one first has to start with the definition of an approximation $\iota_h(\varphi)$ to $\iota(\varphi)$. 
$\iota(\varphi)$ is defined by (see also \eqref{eq:iota})
\begin{align}
  \iota(\varphi) &:= \int_{\Omega} \left( v^n + \dt \ u_x^n\right) \varphi - \dt^2(1-\varepsilon)\left(\frac{v_{x}^{n}}\varepsilon  + g^n\right) \varphi_x \dx  \\ 
                 &=: \int_{\Omega} \iota_1 \varphi - \Delta t^2 (1-\varepsilon) \iota_2 \varphi_x \dx. 
\end{align}
We replace both functions $\iota_1$ and $\iota_2$ by piecewise constant quantities $\iota_{1,h}$ and $\iota_{2,h}$. (Note that piecewise constant functions are still in $L^2(\Omega)$ on a bounded domain $\Omega$.) 
Note furthermore that the only non-trivial term to define is the approximation to both $v_x^n$ and $u_x^n$. We define the approximate derivates $\widetilde w_x^n \equiv (\widetilde v_x^n, \widetilde u_x^n)$ as (piecewise constant) functions in $L^2(\Omega)$ by
\begin{align}
  \label{eq:dx1}
  \widetilde v_x^n(x) &:= \frac{1}{2 \Delta x} \left( {v^n_{i+1}-v^n_{i-1}} +\frac{\Delta x}{\Delta t} (u^n_{i+1} + u^n_{i-1} - 2 u^n_i)  \right)  &\quad \forall x \in \Omega_i, \\
  \label{eq:dx2}
  \widetilde u_x^n(x) &:= \frac{1}{2 \Delta x}  \left(u_{i+1}^n-u_{i-1}^n +   \frac{\Delta x}{\Delta t} (v^n_{i+1} + v^n_{i-1} - 2 v^n_i)  \right) &\quad \forall x \in \Omega_i.
\end{align}
Note that this choice of approximating the derivatives resembles a Lax-Friedrichs numerical flux with unit viscosity.
Consequently, one can approximate the quantities $\iota_{1}$ and $\iota_2$ by
\begin{alignat}{4}
 \iota_{1, h}(x) &:= v_i^n + {\Delta t}                   \widetilde u_x^n    	&\quad \forall x \in \Omega_i\\
 \iota_{2, h}(x) &:= g_i^n + \frac{1}{\varepsilon} \widetilde v_x^n		&\quad \forall x \in \Omega_i, 
\end{alignat}
which yields the following approximation $\iota_h$ to $\iota$:
\begin{align}
  \iota_h(\varphi) :=  \int_{\Omega} \iota_{1, h} \varphi - \Delta t^2 (1-\varepsilon) \iota_{2, h} \varphi_x \dx. 
\end{align}
The equation 
\begin{align}
  \label{eq:op_poisson_cont_approx}
  a(\overline v^{n+1}, \varphi) = \iota_h(\varphi) &\quad \forall \varphi \in H_0^1(\Omega) 
\end{align}
(being an approximation to \eqref{eq:op_poisson}) is now approximated by Finite Elements. Therefore, we define 
 \begin{align}
  V_h := \{\varphi_h \in C^0(\Omega) |{\varphi_h}_{|\Omega_i} \text{is linear for all } i; \varphi_h(0) = \varphi_h(1) = 0 \}
 \end{align}
and seek a solution $v_h^{n+1} \in V_h$, such that 
\begin{align}
  \label{eq:finite_element}
    a(v_h^{n+1}, \varphi_h) = \iota_h(\varphi_h) &\quad \forall \varphi_h \in V_h.
\end{align}
Subsequently, which constitutes the second step, we compute $u_h^{n+1}$ by 
\begin{align}
\label{eq:update_uh}
u_h^{n+1} = u^n + \dt \left(\frac{1}{\varepsilon} \widetilde v_x^{n} + \frac{1-\varepsilon}{\varepsilon^2} \frac{d}{dx} {v_h^{n+1}} + g^n \right), 
 \end{align}
see \eqref{eq:u_update}.
Values $u_i^{n+1}$ and $v_i^{n+1}$ are now obtained by evaluating $u_h^{n+1}$ and $v_h^{n+1}$, respectively, at cell-midpoints.
\subsection{(Order of) Consistency and some stability considerations}\label{sec:consistency}
{In this section, we show that our method is consistent, and we determine its order of consistency. The main theorem of this section is the following:

\begin{theorem}\label{thm:consistency}
 Let $v_h^{n+1}$ be the approximate solution according to the algorithm in Sec. \ref{sec:full_discr} with exact initial data $w^n \equiv w(t^n)$. Under Ass. \ref{ass:v} and \ref{ass:orders} (see below), we have 
 \begin{align}
 \|v_h^{n+1} - v(t^{n+1})\|_{L^2} = O\left(\varepsilon^2 \Delta t^2 + \varepsilon^4 + \varepsilon^3 \Delta x + \frac{\varepsilon^6}{\Delta x^2} \right).
 \end{align}
\end{theorem}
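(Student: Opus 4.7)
The plan is to split the global error by the usual triangle inequality through two intermediate solutions. Introduce $v^{n+1}\in H_0^1(\Omega)$ as the exact semi-discrete solution of \eqref{eq:op_poisson} with data $w^n=w(t^n)$, and $\overline v^{n+1}\in H_0^1(\Omega)$ as the solution of \eqref{eq:op_poisson_cont_approx}. Then
\[
  \|v_h^{n+1}-v(t^{n+1})\|_{L^2}
    \leq \|v^{n+1}-v(t^{n+1})\|_{L^2}
    + \|\overline v^{n+1}-v^{n+1}\|_{L^2}
    + \|v_h^{n+1}-\overline v^{n+1}\|_{L^2},
\]
and estimate each piece in the $\snorm{\cdot}$-norm, using $\|\cdot\|_{L^2}\leq\snorm{\cdot}$ at the end. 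Throughout, I would lean heavily on the multiscale structure \eqref{eq:conseq_multiscale}-\eqref{eq:conseq_multiscale2} (and on the smoothness of the coefficients $v^{(k)},u^{(k)}$ contained in Ass.~\ref{ass:v}-\ref{ass:orders}), which gives $v,\,v_x,\,v_{xx},\,u_x=O(\varepsilon^2)$, while $u,\,u_t=O(1)$.

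For the semi-discrete consistency, I would insert the exact solution into \eqref{eq:elliptic_v} and Taylor-expand $v(t^{n+1})$ and $v_{xx}(t^{n+1})$ around $t^n$, using \eqref{eq:imex1}-\eqref{eq:imex2} to rewrite time derivatives in terms of spatial ones. Because $v_t=u_x=O(\varepsilon^2)$ and $v_{tt}=(u_t)_x=\varepsilon^{-2}v_{xx}+g_x$ which is $O(1)$ but appears multiplied by $\varepsilon^{-2}v_{xx}=O(1)$ combined with the factor $\Delta t^2$, a careful bookkeeping shows the residual $R_{\text{sd}}$ of \eqref{eq:elliptic_v} satisfies $\sdnorm{R_{\text{sd}}}=O(\varepsilon^2\Delta t^2)$. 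Theorem~\ref{thm:konditionszahl_cont} then yields $\snorm{v^{n+1}-v(t^{n+1})}=O(\varepsilon^2\Delta t^2)$, which is the first term in the stated bound.

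For the second piece I would write $\iota-\iota_h=\int(\iota_1-\iota_{1,h})\varphi-\Delta t^2(1-\varepsilon)(\iota_2-\iota_{2,h})\varphi_x\,\dx$ and estimate the two coefficient differences. The piecewise-constant projection errors $v^n-v_i^n$ and $g^n-g_i^n$ are $O(\Delta x\,\varepsilon^2)$ by Remark~\ref{rem:order_vux}, while the Lax-Friedrichs-type formulas \eqref{eq:dx1}-\eqref{eq:dx2} give
\[
  u_x^n-\widetilde u_x^n = O\!\bigl(\Delta x\,\varepsilon^2 + \tfrac{\Delta x^2}{\Delta t}\varepsilon^2\bigr), \qquad
  v_x^n-\widetilde v_x^n = O\!\bigl(\Delta x\,\varepsilon^2 + \tfrac{\Delta x^2}{\Delta t}\varepsilon^2\bigr),
\]
combining central-difference truncation with the extra artificial-viscosity term. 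Testing against $\varphi\in\Vs$ and exploiting $\|\varphi_x\|_{L^2}\leq\snorm{\varphi}/\sqrt\gamma$ with $\sqrt\gamma\sim\Delta t/\varepsilon$ reorganises the prefactors: the $\varphi$-term contributes $O(\varepsilon^3\Delta x)$ and the $\varphi_x$-term (after multiplying by $\Delta t^2(1-\varepsilon)/\sqrt\gamma = O(\varepsilon\Delta t)$) produces the $O(\varepsilon^4)$ contribution under Ass.~\ref{ass:orders} linking $\Delta t$ and $\varepsilon$. Applying Theorem~\ref{thm:konditionszahl_cont} once more gives $\snorm{\overline v^{n+1}-v^{n+1}}=O(\varepsilon^4+\varepsilon^3\Delta x)$.

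For the FE error I would appeal to Cea's lemma (ellipticity constant $1$) in the $\snorm{\cdot}$-norm and bound the interpolation error of $\overline v^{n+1}$ by standard linear-element estimates, $\snorm{I_h\overline v^{n+1}-\overline v^{n+1}}\lesssim (\Delta x+\sqrt\gamma\,\Delta x)\,|\overline v^{n+1}|_{H^2}$. An $H^2$-bound on $\overline v^{n+1}$ is obtained from the strong form of \eqref{eq:op_poisson_cont_approx}: $|\overline v^{n+1}|_{H^2}\leq\gamma^{-1}(\|\overline v^{n+1}\|_{L^2}+\|\iota_{1,h}\|_{L^2})$. Since all data is of order $\varepsilon^2$ and $\gamma^{-1}\sim\varepsilon^2/\Delta t^2$, the $\gamma\sqrt\gamma\,\Delta x$ scalings conspire to give the $O(\varepsilon^6/\Delta x^2)$ term once one passes to $L^2$ via an Aubin--Nitsche duality step with the adjoint of $a(\cdot,\cdot)$. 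Summing the three contributions yields the theorem.

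The hardest part will be the FE error: the coupling between the weighted norm $\snorm{\cdot}$ (which is adapted to $\gamma$) and the elliptic regularity bound on $|\overline v^{n+1}|_{H^2}$ requires careful tracking of how the $\varepsilon$-scalings of the data propagate through the dual problem. The auxiliary $\iota$-discretization estimate is the second subtlety, because the artificial-viscosity term $\tfrac{\Delta x^2}{\Delta t}v_{xx}$ must be balanced against the $\Delta t^2/\varepsilon^2$ coefficient using the regime assumption $\Delta t\geq\varepsilon$; without this regime the $\varepsilon^4$ term in the final estimate degrades.
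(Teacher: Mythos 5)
Your proof follows the paper's own route: the same three-term decomposition through $v^{n+1}$ (solution of \eqref{eq:op_poisson}) and $\overline v^{n+1}$ (solution of \eqref{eq:op_poisson_cont_approx}), the same tools for each piece --- Taylor expansion plus Ass.~\ref{ass:v} for $e_1$, a dual-norm bound on $\iota-\iota_h$ built from the $O(\varepsilon^2\Delta x)$ accuracy of the reconstructed derivatives \eqref{eq:dx1}--\eqref{eq:dx2} for $e_2$, and C\'ea/Aubin--Nitsche combined with the elliptic-regularity bound $|\overline v^{n+1}|_2\lesssim\gamma^{-1}\|\iota_h\|_{{H_0^1}'}$ for $e_3$. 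In that sense the proposal is essentially the paper's proof.

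The one real deviation is that you run $e_2$ in the weighted norm $\snorm{\cdot}$ with the condition number $1$ of Thm.~\ref{thm:konditionszahl_cont}, whereas the paper deliberately switches to the classical $H_0^1$ framework of Thm.~\ref{thm:konditionszahl} with constants $\gamma$ and $M$. This is not purely cosmetic: with $\|\varphi_x\|_{L^2}\le\gamma^{-1/2}\snorm{\varphi}$ and $\gamma^{-1/2}\sim\varepsilon/\Delta t$, your $\varphi_x$-term becomes $O(\varepsilon\Delta t)\cdot O(\varepsilon\Delta x)=O(\varepsilon^2\Delta t\,\Delta x)$, not $O(\varepsilon^4)$ --- Ass.~\ref{ass:orders} links $\Delta t$ to $\Delta x$, not to $\varepsilon$, and the regime $\Delta t\ge\varepsilon$ points the wrong way for the reduction you claim. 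The paper instead pays the full factor $\gamma^{-1}\sim\varepsilon^2/\Delta t^2$ against $\|\iota-\iota_h\|_{{H_0^1}'}$ and obtains the sharper $O(\varepsilon^4\Delta x/\Delta t+\varepsilon^3\Delta x)$. Since $O(\varepsilon^2\Delta t\,\Delta x)=O(\varepsilon^2\Delta t^2)$ under Ass.~\ref{ass:orders}, your weaker intermediate bound is still absorbed by the first term of the theorem, so the final statement survives either way; just do not advertise the intermediate order as $O(\varepsilon^4+\varepsilon^3\Delta x)$. A second small point: $g^n-g_i^n$ is $O(\Delta x)$, not $O(\varepsilon^2\Delta x)$ --- Rem.~\ref{rem:order_vux} gives the $\varepsilon^2$ scaling only for $v$ and $u_x$, not for the source term --- so, exactly as in the paper's Lemma~\ref{la:iota_diff}, the piecewise-constant projection errors of $g^n$ (and of $v^n$ in $\iota_{1,h}$) must be treated separately rather than folded into the $\varepsilon^2\Delta x$ budget.
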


We have decided to put this investigation into the more classical framework of standard $H_0^1$ spaces and norms (instead of using $\Vs$), because in this setting we can use classical Finite-Element spaces and do not have to rely on stabilized Finite-Elements such as SUPG. This, however, comes at the price of restricting $\varepsilon$ to $0 < \varepsilon \leq \varepsilon_0 < 1$ and $\Delta t \geq \varepsilon$. Nevertheless, as we are interested in the $\varepsilon \rightarrow 0$ limit for a moderate time-step $\Delta t$, this is not a severe restriction.

To prove consistency of our scheme, we have to bound the following error parts:
\begin{align}
 e_1 &:= \|v(t^{n+1}) - v^{n+1}\|_{L^2} \\
 e_2 &:= \|v^{n+1} - \overline v^{n+1}\|_{L^2} \\
 e_3 &:= \|\overline v^{n+1} - v_h^{n+1}\|_{L^2}.
\end{align}
The overall consistency error in $v$, $e := \|v(t^{n+1}) - v_h^{n+1}\|_{L^2}$, can then be bounded by the sum of the $e_i$. 
Let us remind the reader of the following definitions:
\begin{itemize}
 \item $v(t^{n+1})$ denotes the exact solution $v$ to \eqref{eq:conslaw} at time $t^{n+1}$.
 \item $v^{n+1}$ denotes the exact solution to the elliptic equation, see \eqref{eq:op_poisson}.
 \item $\overline v^{n+1}$ denotes the solution to the elliptic equation \eqref{eq:op_poisson} with right-hand side $\iota_h$ instead of $\iota$, see \eqref{eq:op_poisson_cont_approx}.
 \item $v_h^{n+1}$ denotes the Finite-Element solution to the elliptic equation, see \eqref{eq:finite_element}. 
\end{itemize}
A schematic overview is given in Fig. \ref{fig:flowchart}. 

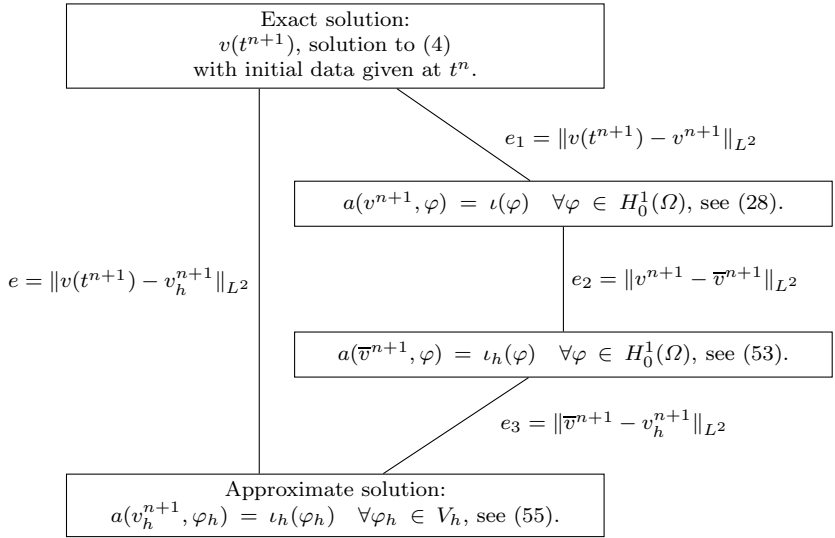
\begin{figure}[htb]
    \begin{center}
	\tikzstyle{decision} = [diamond, draw, 
	    text width=4.5em, text badly centered, node distance=2.2cm, inner sep=0pt]
	\tikzstyle{block_wide} = [rectangle, draw, 
	    text width=13em, text centered, minimum height=4em]
	\tikzstyle{block} = [rectangle, draw, 
	    text width=23em, text centered, minimum height=2em]    
	\tikzstyle{line} = [draw]
	\tikzstyle{cloud} = [draw, ellipse, minimum width = 35em, 
		node distance=3cm, text badly centered, minimum height=3em]
	\begin{tikzpicture}[scale=2,node distance = 2cm, auto]
	    \node [block, align=center] (start) 
	    {	
			Exact solution: \\ $v(t^{n+1})$, solution to \eqref{eq:conslaw} with initial data given at $t^n$. 
	    };
	    \node [block, below of=start, node distance=7em] (compute_primal) at (1.5,-.0)
	    {
			$a(v^{n+1}, \varphi) = \iota(\varphi) \quad \forall \varphi \in H_0^1(\Omega)$, see \eqref{eq:op_poisson}. 
	    };
 	    \node [block, below of=compute_primal] (compute_dual) 
 	    {
   			$a(\overline v^{n+1}, \varphi) = \iota_h(\varphi) \quad \forall \varphi \in H_0^1(\Omega)$, see \eqref{eq:op_poisson_cont_approx}.
 	    };
 	    \node [block, align=center, below of=start, node distance = 7em] (compute_more) at (0, -2.)
 	    {
   			Approximate solution:\\
			$a(v_h^{n+1}, \varphi_h) = \iota_h(\varphi_h) \quad \forall \varphi_h \in V_h$, see \eqref{eq:finite_element}.
 	    }; 	    
	    \path [line] (start) -- node[near end]{$e_1 = \|v(t^{n+1}) - v^{n+1}\|_{L^2}$} (compute_primal) ;
	    \path [line] (compute_primal) -- node[]{$e_2 = \|v^{n+1} - \overline v^{n+1}\|_{L^2}$} (compute_dual);
	    \path [line] (compute_dual) -- node[near start]{$e_3 = \|\overline v^{n+1} - v_h^{n+1}\|_{L^2}$} (compute_more);
	    \path [line] ($(compute_more.north)+(-.5,.0)$) --  node[]{$e =  \|v(t^{n+1}) - v_h^{n+1}\|_{L^2}$} ($(start.south) + (-.5,.0)$);
	\end{tikzpicture}
    \caption{Summary of steps in the consistency analysis. }
    \label{fig:flowchart}
    \end{center}
\end{figure} 

To obtain quantitative bounds on the consistency, we have to assume that both $u$ and $v$ are smooth. Furthermore, we make the following important assumption which is motivated by our investigations concerning the multiscale expansion, see \eqref{eq:conseq_multiscale}-\eqref{eq:conseq_multiscale2} and Rem. \ref{rem:order_vux}:
\begin{assumption}\label{ass:v}
 We assume that both $v$ and $u$ are sufficiently smooth. Furthermore, we assume that $v$ is given by
 \begin{align}
  v(x, t) = \varepsilon^2 v^{(2)}(x, t) + O(\varepsilon^3).
 \end{align}
 and that the spatial derivative of $u$ is given by
 \begin{align}
  u_x(x,t) = \varepsilon^2 u_x^{(2)}(x,t) + O(\varepsilon^3).
 \end{align}
 The big-$O$ notation has to be understood as in \eqref{eq:multiscale1_top}-\eqref{eq:multiscale2_top}.
\end{assumption}
\begin{remark}
 Without this assumption, it will not be possible to perform a consistency analysis for the small $\varepsilon$ limit, because there \emph{is} no limit function as $\varepsilon \rightarrow 0$. This is very similar to the observation in \cite{KlMa81} that the initial data has to be divergence free to allow for an incompressible limit. 
\end{remark}

It is well-known that, in order to get stable schemes, one needs to link both $\Delta t$ and $\Delta x$. In our example, this can be done in two ways, based on either the non-stiff flux $\widehat  f$ or the total flux $f$. Let us therefore make the following definition:
\begin{definition}
 The stiff and non-stiff $\cfl-$numbers $\widetilde \cfl$ and $\widehat \cfl$ are defined by 
 \begin{align}
  \widetilde \cfl := \frac{\Delta t}{\Delta x}  \lambda_{\max},  &\quad\quad
  \widehat \cfl  := \frac{\Delta t}{\Delta x}  \widehat   \lambda_{\max},
 \end{align}
 respectively, where $\lambda_{\max}$ is the maximum eigenvalue of the 'original' system \eqref{eq:conslaw}, and $\widehat \lambda_{\max}$ is the maximum eigenvalue of the non-stiff system corresponding to flux $\widehat f$. In the current case, $\lambda_{\max} = \varepsilon^{-1}$ and $\widehat \lambda_{\max} = 1$. 
\end{definition}

In our analysis, we rely on the non-stiff $\cfl-$number $\widehat \cfl$, so the $\cfl$ number that is independent on $\varepsilon$. It should, however, be less than unity, as the non-stiff part is treated explicitly. Let us state the following assumption:

\begin{assumption}\label{ass:orders}
 We assume that 
 \begin{align}
  \Delta t = \widehat \cfl \Delta x 
 \end{align}
 for a positive real-valued $\widehat \cfl < 1$ (which we usually choose to be $\widehat \cfl = 0.8$).
\end{assumption}

After these introductory statements, we start by bounding $e_1$. 
\begin{lemma}
The temporal discretization yields the following asymptotic error:
\begin{align}
 e_1&:= \|v(t^{n+1}) - v^{n+1}\|_{L^2} = O(\varepsilon^2 \dt^2).
\end{align}
\begin{proof}
 By checking the order of consistency of \eqref{eq:imex1}, one obtains ($t^n \leq \xi_1, \xi_2 \leq t^{n+1}$):
 \begin{align}
 & \frac{1}{\Delta t} \left(v(t^{n+1})-v(t^n) \right)  - \varepsilon u_x(t^n) - (1-\varepsilon) u_x(t^{n+1}) \\
 = & \frac{1}{\Delta t} \left(\Delta t \ v_t(t^n) + \frac{\Delta t^2}{2} v_{tt}(\xi_1)   \right) - \varepsilon u_x(t^n) - (1-\varepsilon) \left(u_x(t^n) + \Delta t \ u_{xt}(\xi_2) \right) \\
 = & v_t(t^n) + \frac{\Delta t}{2} v_{tt}(\xi_1) - u_x(t^n) - (1-\varepsilon) \Delta t \ u_{xt}(\xi_2) \\
 \stackrel{eq. \eqref{eq:p-system1}}{=}& \frac{\Delta t}{2} v_{tt}(\xi_1)- (1-\varepsilon) \Delta t \ u_{xt}(\xi_2) \\ \stackrel{Ass. \ref{ass:v}}{=} &O(\varepsilon^2\Delta t),
 \end{align}
 which yields indeed the desired order of accuracy. 
\end{proof}

\end{lemma}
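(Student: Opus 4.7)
The lemma concerns a pure time-discretization error: by assumption $v^n = v(t^n)$ and $u^n = u(t^n)$, so $v^{n+1}$ is the exact (continuous) solution of the elliptic equation~\eqref{eq:op_poisson} and $e_1$ is just the one-step consistency error of the IMEX semi-discretization~\eqref{eq:imex1}--\eqref{eq:imex2}. My plan has two parts: first, compute the local truncation error (LTE) of \eqref{eq:imex1} by plugging in the exact solution; second, invoke the unit ellipticity constant of $a(\cdot,\cdot)$ on $\Vs(\Omega)$ to convert the LTE into an $L^2$-bound on $e_1$ at the cost of one extra factor $\dt$.

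For the first part, I Taylor-expand the exact solution around $t^n$ and exploit the continuous PDE:
\begin{align}
\frac{v(t^{n+1})-v(t^n)}{\dt} - \varepsilon u_x(t^n) - (1-\varepsilon)u_x(t^{n+1}) = \bigl(v_t(t^n)-u_x(t^n)\bigr) + \tfrac{\dt}{2}v_{tt}(\xi_1) - (1-\varepsilon)\dt\, u_{xt}(\xi_2).
\end{align}
The $O(1)$ term vanishes by \eqref{eq:p-system1}. Without further input the remainder is only $O(\dt)$; the crucial gain comes from Assumption~\ref{ass:v}, which forces $v_{tt} = O(\varepsilon^2)$ (since $v = \varepsilon^2 v^{(2)} + O(\varepsilon^3)$) and $u_{xt} = O(\varepsilon^2)$ (since $u_x = \varepsilon^2 u_x^{(2)} + O(\varepsilon^3)$). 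Thus the LTE $\tau_v = O(\varepsilon^2\dt)$, and without the multiscale structure this sharpening would be unreachable.

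For the second part, I subtract the exact relation from the scheme, eliminate $e_u^{n+1}$ via \eqref{eq:u_update} exactly as in the derivation of \eqref{eq:elliptic_v}, and end up with an elliptic problem for $e_v^{n+1} := v(t^{n+1}) - v^{n+1}$ of the form
\begin{align}
a(e_v^{n+1}, \varphi) = \dt \int_\Omega \tau_v \varphi\,\dx + R_u(\varphi) \qquad \forall \varphi \in \Vs(\Omega),
\end{align}
where $R_u$ gathers the contribution from the LTE of the second equation. The coercivity result of Sec.~\ref{sec:elliptic_equation} (ellipticity constant one) gives $\snorm{e_v^{n+1}} \leq \sdnorm{\dt\,\tau_v + R_u}$, and the trivial embedding $\l2norm{\cdot} \leq \snorm{\cdot}$ then produces $e_1 \leq \sdnorm{\dt\tau_v + R_u}$. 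Provided both contributions are $O(\varepsilon^2\dt^2)$, the lemma follows.

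The main obstacle will be the $R_u$ piece. The LTE of \eqref{eq:imex2} contains $u_{tt}$ and $\varepsilon^{-2}v_{xt}$, each of which is only $O(1)$ pointwise, so a naive bound is too crude. The escape route is that $R_u$ enters the variational form as $\int \tau_u \varphi_x\,\dx$ (after one integration by parts), which in $\sdnorm{\cdot}$ is absorbed by the weight $\gamma^{1/2} = \dt(1-\varepsilon)/\varepsilon$; combined with PDE-induced cancellations — e.g., $v_{xxt} = u_{xxx}$ is actually $O(\varepsilon^2)$ because the $t$-only parts $u^{(0)}(t), u^{(1)}(t)$ of $u$ are annihilated by $\partial_x$ — one recovers the stated rate. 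Making this cancellation sharp, and in particular ruling out a spurious $O(\dt^3)$ loss in the regime $\dt \geq \varepsilon$, is the delicate point; it is essentially a careful audit of which derivatives of $u$ and $v$ do or do not inherit the extra $\varepsilon^2$ factor guaranteed by Assumption~\ref{ass:v}.
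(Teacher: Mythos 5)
Your first step is exactly the paper's proof: Taylor-expand \eqref{eq:imex1} around $t^n$, cancel the $O(1)$ terms using \eqref{eq:p-system1}, and invoke Assumption \ref{ass:v} to conclude $v_{tt}=O(\varepsilon^2)$ and $u_{xt}=O(\varepsilon^2)$, so that the local truncation error of the $v$-equation is $O(\varepsilon^2\Delta t)$. The paper stops there and simply reads off the one-step error as $\Delta t$ times this truncation error; it never examines the truncation error of \eqref{eq:imex2} at all. So up to that point you have reproduced the published argument, and your observation that the multiscale structure is indispensable for the extra $\varepsilon^2$ is exactly the paper's point.

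Where you go further --- propagating the truncation errors of \emph{both} equations through the elliptic error equation $a(e_v,\varphi)=\Delta t\int_\Omega\tau_v\varphi\dx-\Delta t^2(1-\varepsilon)\int_\Omega\tau_u\varphi_x\dx$ --- you correctly identify the $R_u$ piece as the crux, but the escape route you sketch does not close. The truncation error of \eqref{eq:imex2} is $\tau_u=\tfrac{\Delta t}{2}u_{tt}(\xi)-\tfrac{(1-\varepsilon)\Delta t}{\varepsilon^2}v_{xt}(\xi')$, and the dangerous term is $\varepsilon^{-2}v_{xt}$. The cancellation you invoke ($v_{xt}=u_{xx}=O(\varepsilon^2)$, and likewise after one more $x$-derivative) exactly compensates the prefactor $\varepsilon^{-2}$ and leaves an $O(1)$ quantity; it does not produce a spare power of $\varepsilon$. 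Absorbing $\|\varphi_x\|_{L^2}\le\gamma^{-1/2}\snorm{\varphi}$ with $\gamma^{-1/2}=\varepsilon/(\Delta t(1-\varepsilon))$ then gives $\Delta t^2\cdot O(\Delta t)\cdot\varepsilon/\Delta t=O(\varepsilon\Delta t^2)$ for this contribution, one power of $\varepsilon$ short of the stated $O(\varepsilon^2\Delta t^2)$; testing against $\varphi$ instead of $\varphi_x$ yields $O(\Delta t^3)$, which is even worse in the regime $\Delta t\ge\varepsilon$. (The $u_{tt}$ part is harmless: its $x$-independent leading order integrates to zero against $\varphi_x$, and its $x$-dependent remainder is $O(\varepsilon^2)$.) So, as written, your plan proves $e_1=O(\varepsilon\Delta t^2)$ rather than $O(\varepsilon^2\Delta t^2)$; to recover the lemma's rate you would need a genuinely sharper treatment of the $\varepsilon^{-2}v_{xt}$ term, or you must fall back on the paper's shortcut of bounding only the truncation error of \eqref{eq:imex1} and multiplying by $\Delta t$.
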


Let us continue by bounding $e_2$. $w^n$ denotes the (assumed smooth) exact solution $w = (v, u)^T$ at time $t^n$. By $w_x^n$, we denote the exact derivative of $w$ at time $t^n$, and by $\widetilde w_x^n$, we denote the approximation of the derivative by numerical flux functions. 
We can state the following lemma:
\begin{lemma}
We consider approximations $\widetilde w_x^n$ to the derivatives $w_x^n$ as in \eqref{eq:dx1}-\eqref{eq:dx2}. Under Ass. \ref{ass:v} and \ref{ass:orders}, the following holds:
\begin{align}
 \label{eq:inequality_wx}
 \|w_x^n - \widetilde w_x^n\|_{L^2} = O(\varepsilon^2 \Delta x).
\end{align}
\end{lemma}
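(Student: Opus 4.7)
The plan is to prove the bound pointwise on each cell $\Omega_i$ via Taylor expansion around the midpoint $\overline x_i$, and then integrate. Throughout, the decisive ingredient will be that Assumption \ref{ass:v} forces not only $v$ and $u_x$ but (given enough smoothness of the profile functions $v^{(k)},u^{(k)}$) also every sufficiently low spatial derivative to inherit the $\varepsilon^2$ prefactor. In particular $v_{xx},v_{xxx},u_{xx},u_{xxx}$ and $u_{xxxx},v_{xxxx}$ are all $O(\varepsilon^2)$.

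First, for $x\in\Omega_i$ I would split
\begin{align*}
\widetilde v_x^n(x) - v_x(x,t^n) = \bigl[\widetilde v_x^n(\overline x_i) - v_x(\overline x_i,t^n)\bigr] + \bigl[v_x(\overline x_i,t^n) - v_x(x,t^n)\bigr].
\end{align*}
The second bracket is controlled by $|x-\overline x_i|\,\|v_{xx}(\cdot,t^n)\|_\infty \le \tfrac{\Delta x}{2}\cdot O(\varepsilon^2)$, hence of the desired order. For the first bracket, Taylor-expand $v^n_{i\pm 1}$ and $u^n_{i\pm 1}$ around $\overline x_i$. The central-difference part of $\widetilde v_x^n$ satisfies
\begin{align*}
\frac{v^n_{i+1}-v^n_{i-1}}{2\Delta x} = v_x(\overline x_i,t^n) + \frac{\Delta x^2}{6}v_{xxx}(\overline x_i,t^n) + O\!\left(\Delta x^4\|v^{(5)}\|_\infty\right),
\end{align*}
which by Assumption \ref{ass:v} is $v_x(\overline x_i,t^n) + O(\varepsilon^2\Delta x^2)$. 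The Lax--Friedrichs-type viscosity correction expands as
\begin{align*}
\frac{1}{2\Delta t}\bigl(u^n_{i+1}+u^n_{i-1}-2u^n_i\bigr) = \frac{\Delta x^2}{2\Delta t}\,u_{xx}(\overline x_i,t^n) + O\!\left(\frac{\Delta x^4}{\Delta t}\|u_{xxxx}\|_\infty\right).
\end{align*}
Here Assumption \ref{ass:orders} ($\Delta t = \widehat{\cfl}\,\Delta x$) rewrites the leading factor as $\Delta x/(2\widehat{\cfl})$, and $u_{xx}=O(\varepsilon^2)$ turns the whole viscosity term into $O(\varepsilon^2\Delta x)$. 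Adding the two contributions gives $|\widetilde v_x^n(\overline x_i)-v_x(\overline x_i,t^n)|=O(\varepsilon^2\Delta x)$.

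The argument for $\widetilde u_x^n$ is strictly symmetric: the central difference in $u$ has truncation term $\tfrac{\Delta x^2}{6}u_{xxx}=O(\varepsilon^2\Delta x^2)$, and the viscosity correction involves $v_{xx}=O(\varepsilon^2)$, yielding $O(\varepsilon^2\Delta x)$. Combining with the midpoint-to-cell contribution treated above, we obtain
\begin{align*}
\|w_x^n - \widetilde w_x^n\|_{L^\infty(\Omega_i)} = O(\varepsilon^2\Delta x)
\end{align*}
uniformly in $i$, and since $\Omega$ is bounded the $L^2(\Omega)$ norm inherits the same order, proving \eqref{eq:inequality_wx}.

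The only delicate step is the viscosity correction: absent either ingredient it would spoil the bound. Without the CFL link $\Delta t=O(\Delta x)$, the prefactor $\Delta x^2/\Delta t$ would not shrink, costing a factor of $\Delta x^{-1}$; without Assumption \ref{ass:v}, the factor $u_{xx}$ (resp.\ $v_{xx}$) would contribute $O(1)$ instead of $O(\varepsilon^2)$, and the estimate would reduce to the usual $O(\Delta x)$ with no $\varepsilon^2$ gain. Thus the heart of the lemma is verifying that both mechanisms conspire to produce the joint factor $\varepsilon^2\Delta x$, which is exactly the order required for the subsequent consistency analysis to beat the worst-case $O(\varepsilon^{-1})$ scaling.
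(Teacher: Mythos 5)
Your proof is correct and takes essentially the same route as the paper's: Taylor expansion about the cell midpoint $\overline x_i$, with Assumption \ref{ass:v} supplying the $\varepsilon^2$ factor on the second derivatives and Assumption \ref{ass:orders} taming the $\Delta x^2/\Delta t$ prefactor of the Lax--Friedrichs viscosity term, followed by a separate midpoint-to-cell-interior estimate and summation over cells. The only cosmetic difference is that you expand to higher order (so you need $v_{xxx}, u_{xxxx}$, etc.\ to inherit the $\varepsilon^2$ scaling), whereas the paper stops at the Lagrange remainder involving only second derivatives evaluated at intermediate points.
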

\begin{proof}
  A Taylor's series expansion yields that 
  \begin{align}
       & \frac{1}{2 \Delta x} \left( {v^n_{i+1}-v^n_{i-1}} +\frac{\Delta x}{\Delta t} (u^n_{i+1} + u^n_{i-1} - 2 u^n_i)  \right)  \\
   = \ & v^n_x(\overline x_i) + \frac{\Delta x}{4} \left(v^n_{xx}(\xi_1) - v^n_{xx}(\xi_2) \right) + u^n_{xx}(\xi_3) \frac{\Delta x^2}{2\Delta t}.
  \end{align}
  Note that $O(\frac{\Delta x}{\Delta t}) = O(1)$ (Ass. \ref{ass:orders}) and both $O(v_{xx}^n)$ and $O(u_{xx}^n)$ are $O(\varepsilon^2)$ (Ass. \ref{ass:v}).  Furthermore, 
  \begin{align}
   v^n_x(x) = v^n_x(\overline x_i) + v^n_{xx}(\xi_4) (x-\overline x_i) = v^n_x(\overline x_i) + O(\varepsilon^2 \Delta x) &\quad \forall x \in \Omega_i.
  \end{align} 
  Consequently, 
  \begin{align}
    \| v^n_x(\cdot) - \widetilde v^n_x(\cdot) \|_{L^2(\Omega)} &\leq \sum_{i=1}^{N_x} \left( \| v^n_x(\cdot) - v^n_x(\overline x_i) \|_{L^2(\Omega_i)} + \| v^n_x(\overline x_i) - \widetilde v^n_x(\cdot) \|_{L^2(\Omega_i)} \right) \\ 
    & = O(\varepsilon^2 \Delta x).
  \end{align}
  The proof for $u$ goes along the same lines. 
\end{proof}

Before considering the full approximation error, we have to turn to the variational equation \eqref{eq:op_poisson} again in the context of classical Sobolev-spaces. 
Following standard convention, we define the $H_0^1-$norm to be 
\begin{align}
 \|\varphi\|_{H_0^1} := \|\varphi_x\|_{L^2}, 
\end{align}
and remind the reader of Poincar\'e-Friedrich's inequality
\begin{align}
 \|\varphi\|_{L^2} \leq C_{PF} \|\varphi\|_{H_0^1}.
\end{align}
We start with the following theorem that guarantees that \eqref{eq:op_poisson} is, also for small $\varepsilon$, 'easy' to solve. 
\begin{theorem}\label{thm:konditionszahl}
 For a given $\varepsilon_0 < 1$, let $0 < \varepsilon \leq \varepsilon_0$, and $\Delta t \geq \varepsilon$. The equation \eqref{eq:op_poisson} is well-conditioned in $H_0^1$ independently of $\varepsilon$, which means that for two functionals $\iota, \widetilde \iota \in H_0^1(\Omega)'$, and the corresponding solutions $v$ and $\widetilde v$, one has the relation
 \begin{align}
  \label{eq:konditionszahl}
  \frac{\|v - \widetilde v\|_{H_0^1}}{\|v\|_{H_0^1}} \leq \frac{M}{\gamma} \frac{\|\iota - \widetilde \iota\|_{{H_0^1}'}}{\|\iota\|_{{H_0^1}'}}, 
 \end{align}
 and $\frac M\gamma$ can be bounded by a constant independent of $\varepsilon$.
\begin{proof}
It is easy to see that $a(\cdot, \cdot)$ fulfills, for $\varepsilon < 1$, an ellipticity condition on $H_0^1(\Omega)$ with ellipticity-constant $\gamma$, and it is a bounded bilinear form with stability constant $M$. Both $\gamma$ and $M$ can be explicitly given as
\begin{align}
\gamma = \frac{\dt^2 (1-\varepsilon)^2}{\varepsilon^2}, \quad
 M = \gamma + C_{PF}^2. 
\end{align}
The rest of the proof goes along the lines of Thm. \ref{thm:konditionszahl_cont}. 
Note that the quotient $\frac{M}{\gamma}$ is bounded for all $\varepsilon \leq \varepsilon_0 < 1$.
\end{proof}
\end{theorem}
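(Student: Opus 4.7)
The plan is to mirror the proof of Thm.\ \ref{thm:konditionszahl_cont}, but work on the fixed space $H_0^1(\Omega)$ rather than on the $\varepsilon$-weighted space $\Vs(\Omega)$. The payoff of this change of function space is that the ellipticity and boundedness constants of $a(\cdot,\cdot)$ will no longer both be one; instead they will differ, and the quotient $M/\gamma$ will be the condition number. The restrictions $\varepsilon\leq\varepsilon_0<1$ and $\Delta t\geq \varepsilon$ are then exactly what is needed to keep this quotient bounded independently of $\varepsilon$.

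The first step is to verify ellipticity: testing $a(\cdot,\cdot)$ against the diagonal and discarding the non-negative $L^2$-term gives
\begin{align}
 a(\varphi,\varphi) \;=\; \gamma\,\|\varphi_x\|_{L^2}^2 + \|\varphi\|_{L^2}^2 \;\geq\; \gamma\,\|\varphi\|_{H_0^1}^2,
\end{align}
so the ellipticity constant is $\gamma=\Delta t^2(1-\varepsilon)^2/\varepsilon^2$. The second step is boundedness: Cauchy--Schwarz on both terms of \eqref{eq:bilinearform} together with Poincar\'e--Friedrichs $\|\varphi\|_{L^2}\leq C_{PF}\|\varphi\|_{H_0^1}$ yields
\begin{align}
 a(\varphi,\psi) \;\leq\; \bigl(\gamma + C_{PF}^2\bigr)\,\|\varphi\|_{H_0^1}\|\psi\|_{H_0^1},
\end{align}
so $M=\gamma+C_{PF}^2$ as claimed. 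The classical condition-number argument (which I would execute exactly as in \eqref{eq:elliptic_ineq_cont}--\eqref{eq:bounded_ineq_cont}, replacing $\snorm{\cdot}$ by $\|\cdot\|_{H_0^1}$ and $\sdnorm{\cdot}$ by $\|\cdot\|_{{H_0^1}'}$, and picking up the factor $M/\gamma$ on the way) then delivers \eqref{eq:konditionszahl}.

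The only genuinely new ingredient, and the real heart of the theorem, is the last line: showing that $M/\gamma$ stays $O(1)$ as $\varepsilon\to0$. For this I would just compute
\begin{align}
 \frac{M}{\gamma} \;=\; 1 + \frac{C_{PF}^2}{\gamma} \;=\; 1 + \frac{C_{PF}^2\,\varepsilon^2}{\Delta t^2\,(1-\varepsilon)^2}
\end{align}
and then use both standing hypotheses: $\Delta t\geq\varepsilon$ kills the ratio $\varepsilon^2/\Delta t^2$ (bounding it by one), while $\varepsilon\leq\varepsilon_0<1$ bounds $(1-\varepsilon)^{-2}$ by the constant $(1-\varepsilon_0)^{-2}$. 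The resulting bound $M/\gamma\leq 1+C_{PF}^2/(1-\varepsilon_0)^2$ is uniform in $\varepsilon$, which is the claim.

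I do not expect any serious obstacle here: ellipticity and boundedness are immediate, and the condition-number identity is textbook. The only subtlety is the role of the assumption $\Delta t\geq\varepsilon$, which at first sight looks innocuous but is exactly what prevents the diffusion coefficient $\gamma$ from degenerating relative to the reaction term and hence what keeps the $H_0^1$-conditioning uniform. It is worth flagging that without this assumption one would be forced back into the weighted setting of Thm.\ \ref{thm:konditionszahl_cont}.
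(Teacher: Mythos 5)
Your proof is correct and follows essentially the same route as the paper: ellipticity constant $\gamma$, boundedness constant $M=\gamma+C_{PF}^2$ via Poincar\'e--Friedrichs, the standard condition-number argument from Thm.~\ref{thm:konditionszahl_cont}, and the observation that $\Delta t\geq\varepsilon$ together with $\varepsilon\leq\varepsilon_0<1$ keeps $M/\gamma$ uniformly bounded. Your explicit computation $M/\gamma = 1+C_{PF}^2\varepsilon^2/(\Delta t^2(1-\varepsilon)^2)\leq 1+C_{PF}^2/(1-\varepsilon_0)^2$ merely spells out the final step that the paper leaves implicit.
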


\begin{remark}
 Thm. \ref{thm:konditionszahl} is an important result that can not be taken for granted. Standard codes will suffer from instabilities when small parameters, such as $\varepsilon$, occur. 
 Due to C\'ea's Lemma \cite{Ciarlet1978}, the $H^1-$error in a Finite-Element approximation of \eqref{eq:op_poisson} is bounded by $\frac M{\gamma}$ times the best-approximation error. 
\end{remark}

Let us return to our overall algorithm. Computing an approximate solution, we introduce two errors: One error from using a Finite-Element space instead of the whole Sobolev space, and one from considering $\iota_h$ instead of $\iota$. We start by computing the difference between the latter two: 

\begin{lemma}\label{la:iota_diff}
For a given $\varepsilon_0 < 1$, let $0 < \varepsilon \leq \varepsilon_0$. Furthermore, let $\iota$ and $\iota_h$ be defined as in Sec. \ref{sec:full_discr}. Its difference can be bounded in terms of $\Delta t$ and $\Delta x$ as 
\begin{align}
 \left\|\iota - \iota_h \right\|_{{H_0^1}'} = O\left(\varepsilon^2 \Delta t  \Delta x + {\Delta x \Delta t^2}{\varepsilon}\right).
\end{align}
\begin{proof}
From \eqref{eq:inequality_wx} and Ass. \ref{ass:v}, we can conclude that 
\begin{align}
 \left|\iota(\varphi) - \iota_h(\varphi) \right| 
  &= \left| \int_{\Omega} \left( \dt\left(u_x^n - \widetilde u_x^n\right) \varphi - \frac{\dt^2(1-\varepsilon)}{\varepsilon}\left(v_{x}^{n} - \widetilde v_x^n\right) \varphi_x \right)\dx \right| \\
  &\leq C \left(\Delta t + \frac{\Delta t^2}{\varepsilon}\right) \|w_x^n-\widetilde w_x^n \|_{L^2}  \|\varphi\|_{{H_0^1}} \\
  &= O\left(\varepsilon^2 \Delta t  \Delta x + {\Delta x \Delta t^2}{\varepsilon}\right) \|\varphi\|_{H_0^1}
\end{align}
for a constant $C \in \R$.
\end{proof}
\end{lemma}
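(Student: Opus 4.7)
The plan is to first express $\iota(\varphi)-\iota_h(\varphi)$ as a single integral by subtracting the two formulas displayed immediately above the statement. Because $\iota$ and $\iota_h$ are tested against $\varphi$ and $\varphi_x$ with identical coefficients $1$ and $-\Delta t^2(1-\varepsilon)/\varepsilon$ (after pulling the $1/\varepsilon$ inside), the cell values of $v^n$ cancel against the smooth $v^n$ up to pointwise-constant sampling error, which under Ass.~\ref{ass:v} is $O(\varepsilon^2 \Delta x)$ and of strictly lower asymptotic order than what we are about to obtain; the same holds for $g^n$. After collecting terms, the remaining dominant contributions are $\Delta t \,(u_x^n-\widetilde u_x^n)\varphi$ and $-\tfrac{\Delta t^2(1-\varepsilon)}{\varepsilon}\,(v_x^n-\widetilde v_x^n)\varphi_x$.

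Next, I would bound this bilinear expression by Cauchy--Schwarz in $L^2(\Omega)$, giving $\Delta t \,\|u_x^n - \widetilde u_x^n\|_{L^2}\|\varphi\|_{L^2} + \tfrac{\Delta t^2(1-\varepsilon)}{\varepsilon}\|v_x^n - \widetilde v_x^n\|_{L^2}\|\varphi_x\|_{L^2}$. To unify the test-function norm I would apply Poincar\'e--Friedrichs to estimate $\|\varphi\|_{L^2} \leq C_{PF}\|\varphi\|_{H_0^1}$, so that both summands carry $\|\varphi\|_{H_0^1}$. Noting that $1-\varepsilon \leq 1$ and using $\|w_x^n - \widetilde w_x^n\|_{L^2}$ as a common upper bound for the two derivative errors, the factor in front of $\|\varphi\|_{H_0^1}$ becomes $C\bigl(\Delta t + \Delta t^2/\varepsilon\bigr)\|w_x^n - \widetilde w_x^n\|_{L^2}$.

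Taking the supremum over $\varphi \in H_0^1(\Omega)$ with $\|\varphi\|_{H_0^1}=1$ delivers the required dual-norm bound. Finally I would invoke the preceding lemma to substitute $\|w_x^n - \widetilde w_x^n\|_{L^2} = O(\varepsilon^2 \Delta x)$, which is valid under Ass.~\ref{ass:v} and~\ref{ass:orders}. Carrying out the multiplication yields $\Delta t \cdot \varepsilon^2 \Delta x + (\Delta t^2/\varepsilon)\cdot \varepsilon^2\Delta x = \varepsilon^2 \Delta t\,\Delta x + \varepsilon\,\Delta t^2 \Delta x$, matching the claim. The main obstacle is the careful bookkeeping of which parts of $\iota_1,\iota_2$ are replaced by their cell-midpoint values and which by the numerical derivatives $\widetilde w_x^n$; one must confirm that the former produce only higher-order contributions (directly from Ass.~\ref{ass:v}, since both $v$ and $u_x$ are $O(\varepsilon^2)$ in $C^1$), so that the derivative error from \eqref{eq:inequality_wx} genuinely dictates the final order.
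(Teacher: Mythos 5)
Your argument follows the paper's proof essentially step for step: express $\iota-\iota_h$ as an integral, apply Cauchy--Schwarz, convert $\|\varphi\|_{L^2}$ into $\|\varphi\|_{H_0^1}$ via Poincar\'e--Friedrichs, bound $1-\varepsilon\leq 1$, and insert $\|w_x^n-\widetilde w_x^n\|_{L^2}=O(\varepsilon^2\Delta x)$ from the preceding lemma; multiplying out gives exactly the claimed $O(\varepsilon^2\Delta t\,\Delta x+\varepsilon\,\Delta t^2\Delta x)$. So on the core mechanism you and the paper agree.

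The one place you diverge is in explicitly accounting for the replacement of $v^n$ by $v^n_i$ and $g^n$ by $g^n_i$ inside $\iota_{1,h}$ and $\iota_{2,h}$; the paper's first displayed equality silently assumes these contributions cancel. You are right to flag them, but your reason for discarding them --- that the sampling error is ``of strictly lower asymptotic order'' --- does not hold as stated. The $v$-term contributes $\int_\Omega(v^n-v^n_i)\varphi\dx$ with no factor of $\Delta t$, and $\|v^n-v^n_i\|_{L^2}=O(\varepsilon^2\Delta x)$ by Ass.~\ref{ass:v}, so the naive bound is $O(\varepsilon^2\Delta x)$, which is \emph{larger} than the target $\varepsilon^2\Delta t\,\Delta x$ once $\Delta t=O(\Delta x)\to 0$. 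To absorb it one needs the midpoint-rule cancellation $\int_{\Omega_i}\left(v^n-v^n(\overline x_i)\right)\dx=O(\varepsilon^2\Delta x^3)$ together with a cellwise Poincar\'e argument, which upgrades the estimate to $O(\varepsilon^2\Delta x^2)=O(\varepsilon^2\Delta t\,\Delta x)$. The $g$-term is more stubborn: $\Delta t^2(1-\varepsilon)(g^n-g^n_i)$ is tested against $\varphi_x$, and since Ass.~\ref{ass:v} imposes no $\varepsilon$-smallness on $g$, its contribution is $O(\Delta t^2\Delta x)$, which exceeds the claimed $O(\varepsilon\,\Delta t^2\Delta x)$ by a factor $\varepsilon^{-1}$. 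This defect is inherited from the paper, whose proof drops both terms without comment, so you are if anything more careful in spirit --- but the justification you give for neglecting them is not the correct one, and for the $g$-term a correct one would require an additional assumption or a modified estimate.
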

The following lemma bounds the error that occurs when using only the approximate right-hand side $\iota_h$ instead of $\iota$:
\begin{lemma}
For a given $\varepsilon_0 < 1$, let $0 < \varepsilon \leq \varepsilon_0 < 1$. Furthermore, let $v^{n+1}$ and $\overline v^{n+1}$  denote the solutions to 
\begin{alignat}{2}
 \label{eq:op_poisson_approx}
 a(\overline v^{n+1}, \varphi) &= \iota_h(\varphi) &\quad& \forall \varphi \in H_0^1(\Omega), \\
 a( v^{n+1}, \varphi) &= \iota(\varphi) 	   &\quad& \forall \varphi \in H_0^1(\Omega). 
\end{alignat}
One can estimate the difference as 
\begin{align}
 \label{eq:error_pde}
 e_2 = \|\overline v^{n+1}-v^{n+1}\|_{L^2} = O\left(\varepsilon^4 \frac{\Delta x}{\Delta t} + \varepsilon^3 \Delta x\right).
\end{align}
\begin{proof}
The difference between $\overline v^{n+1}$ and $v^{n+1}$ can be computed by 
\begin{align}
\gamma \|\overline v^{n+1}-v^{n+1}\|^2_{{H_0^1}}  
  & \leq a(\overline v^{n+1}- v^{n+1}, \overline v^{n+1}-v^{n+1}) \\ 
  &= \iota_h(\overline v^{n+1} - v^{n+1}) - \iota(\overline v^{n+1} - v^{n+1}) \\
  &\leq \|\iota_h - \iota\|_{{H_0^1}'} \|\overline v^{n+1} - v^{n+1}\|_{{H_0^1}},
\end{align}
and, subsequently, 
\begin{align}
\|\overline v^{n+1}-v^{n+1}\|_{L^2} & \leq C_{PF} \|\overline v^{n+1}-v^{n+1}\|_{H_0^1} \\
  & \leq \frac{C_{PF}}{{\gamma}} \|\iota_h - \iota\|_{{H_0^1}'} = O\left(\varepsilon^4 \frac{\Delta x}{\Delta t} + \varepsilon^3 \Delta x\right)
\end{align}
because of La. \ref{la:iota_diff} and $\gamma^{-1} = O(\frac{\varepsilon^2}{\Delta t^2})$ for $\varepsilon, \Delta t \rightarrow 0$. 
\end{proof}
\end{lemma}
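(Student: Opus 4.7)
The plan is to exploit that $\overline v^{n+1}$ and $v^{n+1}$ both satisfy the same elliptic problem $a(\cdot,\varphi) = \ldots$, just with different right-hand sides. By linearity of the bilinear form, the difference $\delta := \overline v^{n+1} - v^{n+1}$ satisfies
\begin{align}
  a(\delta, \varphi) = \iota_h(\varphi) - \iota(\varphi) \qquad \forall \varphi \in H_0^1(\Omega),
\end{align}
so the task reduces to a classical elliptic stability estimate for the source perturbation $\iota_h - \iota$, combined with the already-proven bound in Lemma \ref{la:iota_diff}.

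The first step is to establish an $H_0^1$-bound for $\delta$. Since $a(\cdot,\cdot)$ is coercive on $H_0^1(\Omega)$ with ellipticity constant $\gamma$ (see the proof of Thm.\ \ref{thm:konditionszahl}), testing with $\varphi = \delta$ gives
\begin{align}
  \gamma \, \|\delta\|_{H_0^1}^2 \leq a(\delta,\delta) = (\iota_h - \iota)(\delta) \leq \|\iota_h - \iota\|_{{H_0^1}'} \, \|\delta\|_{H_0^1},
\end{align}
hence $\|\delta\|_{H_0^1} \leq \gamma^{-1} \|\iota_h - \iota\|_{{H_0^1}'}$. The Poincaré-Friedrichs inequality then upgrades this to an $L^2$-bound, $\|\delta\|_{L^2} \leq C_{PF}\gamma^{-1}\|\iota_h - \iota\|_{{H_0^1}'}$.

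The second step is to insert the quantitative information. From Lemma \ref{la:iota_diff} we have $\|\iota_h - \iota\|_{{H_0^1}'} = O(\varepsilon^2 \Delta t \Delta x + \Delta x \Delta t^2 \varepsilon)$, and under the restriction $0 < \varepsilon \leq \varepsilon_0 < 1$ the factor $(1-\varepsilon)^{-2}$ is bounded, so $\gamma^{-1} = O(\varepsilon^2/\Delta t^2)$. Multiplying,
\begin{align}
  \|\delta\|_{L^2} = O\!\left(\tfrac{\varepsilon^2}{\Delta t^2}\right) \cdot O\!\left(\varepsilon^2 \Delta t \Delta x + \varepsilon \Delta x \Delta t^2\right)
  = O\!\left(\varepsilon^4 \tfrac{\Delta x}{\Delta t} + \varepsilon^3 \Delta x\right),
\end{align}
which is precisely the claimed bound for $e_2$.

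I expect no real obstacle here: the argument is textbook elliptic perturbation stability, and all the ingredients (coercivity constant $\gamma$, the source-perturbation estimate, Poincaré-Friedrichs) are already in place in the excerpt. The only point requiring care is bookkeeping of the $\varepsilon$- and $\Delta t$-powers when combining $\gamma^{-1}$ with the two terms from Lemma \ref{la:iota_diff}; in particular, one must verify that the assumption $\Delta t \geq \varepsilon$ is not needed for this lemma itself (only $\varepsilon \leq \varepsilon_0 < 1$ enters, to keep $(1-\varepsilon)$ bounded away from zero).
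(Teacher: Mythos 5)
Your proposal is correct and follows essentially the same route as the paper: coercivity of $a(\cdot,\cdot)$ on $H_0^1(\Omega)$ with constant $\gamma$ tested against the difference, a duality/Cauchy--Schwarz bound by $\|\iota_h-\iota\|_{{H_0^1}'}$, the Poincar\'e--Friedrichs inequality to pass to the $L^2$-norm, and finally Lemma \ref{la:iota_diff} together with $\gamma^{-1}=O(\varepsilon^2/\Delta t^2)$. The power bookkeeping matches the paper exactly, so nothing further is needed.
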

\begin{corollary}
 A simple consequence of the proof is that
 \begin{align}
  \label{eq:vqermvh1}
 \|\overline v^{n+1}-v^{n+1}\|_{{H_0^1}} = O\left(\varepsilon^4 \frac{\Delta x}{\Delta t} + \varepsilon^3 \Delta x\right).
 \end{align}
\end{corollary}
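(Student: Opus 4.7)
The plan is to observe that the $H_0^1$ bound is already implicit in the proof of the preceding lemma, obtained before the Poincar\'e--Friedrichs step degrades it to an $L^2$ estimate. So no new machinery is required; one only needs to stop the chain of inequalities one step earlier.

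First I would write down the Galerkin orthogonality-type identity: by subtracting the two variational problems, $a(\overline v^{n+1} - v^{n+1}, \varphi) = \iota_h(\varphi) - \iota(\varphi)$ for every $\varphi \in H_0^1(\Omega)$. Testing with $\varphi = \overline v^{n+1} - v^{n+1}$ and invoking $H_0^1$-ellipticity of $a(\cdot,\cdot)$ with constant $\gamma = \Delta t^2 (1-\varepsilon)^2/\varepsilon^2$ (as established in the proof of Theorem \ref{thm:konditionszahl}) gives
\begin{align*}
\gamma \, \|\overline v^{n+1} - v^{n+1}\|_{H_0^1}^2
  &\leq a(\overline v^{n+1} - v^{n+1}, \overline v^{n+1} - v^{n+1}) \\
  &= (\iota_h - \iota)(\overline v^{n+1} - v^{n+1}) \\
  &\leq \|\iota_h - \iota\|_{{H_0^1}'} \, \|\overline v^{n+1} - v^{n+1}\|_{H_0^1}.
\end{align*}
Dividing both sides by $\gamma \, \|\overline v^{n+1} - v^{n+1}\|_{H_0^1}$ (assuming the non-trivial case) yields the clean bound $\|\overline v^{n+1} - v^{n+1}\|_{H_0^1} \leq \gamma^{-1} \, \|\iota_h - \iota\|_{{H_0^1}'}$.

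The final step is to insert the two asymptotic estimates. From Lemma \ref{la:iota_diff} we have $\|\iota - \iota_h\|_{{H_0^1}'} = O(\varepsilon^2 \Delta t \Delta x + \Delta x \Delta t^2 \varepsilon)$, and from the definition of $\gamma$ together with the regime $\Delta t \geq \varepsilon$ and $\varepsilon \leq \varepsilon_0 < 1$ we get $\gamma^{-1} = O(\varepsilon^2/\Delta t^2)$. Multiplying these two asymptotic terms gives
\begin{align*}
\frac{\varepsilon^2}{\Delta t^2}\bigl(\varepsilon^2 \Delta t \Delta x + \Delta x \Delta t^2 \varepsilon\bigr) = \varepsilon^4 \frac{\Delta x}{\Delta t} + \varepsilon^3 \Delta x,
\end{align*}
which is exactly the claimed bound \eqref{eq:vqermvh1}.

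There is no real obstacle here: the corollary is essentially the second-to-last line of the preceding lemma's proof, where the authors simply skipped applying Poincar\'e--Friedrichs. The only thing to be careful about is transcribing the correct powers of $\varepsilon$ and $\Delta t$ from the product $\gamma^{-1} \|\iota - \iota_h\|_{{H_0^1}'}$; in particular that the $\varepsilon^{-2}$ from $\gamma^{-1}$ combines with the leading $\varepsilon^2$ factor in Lemma \ref{la:iota_diff} to produce $\varepsilon^4/\Delta t$, not a lower power. Since this is obtained \emph{en route} to the $L^2$ bound in the previous lemma, the result is indeed a direct corollary rather than a separate estimate.
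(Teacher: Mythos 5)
Your proposal is correct and follows exactly the route the paper intends: the $H_0^1$ estimate $\|\overline v^{n+1}-v^{n+1}\|_{H_0^1} \leq \gamma^{-1}\|\iota_h-\iota\|_{{H_0^1}'}$ is precisely the intermediate line of the preceding lemma's proof before the Poincar\'e--Friedrichs inequality is applied, and your bookkeeping of $\gamma^{-1} = O(\varepsilon^2/\Delta t^2)$ against the bound from Lemma \ref{la:iota_diff} reproduces the stated orders. Nothing further is needed.
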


%

\begin{remark}
 Assumption \eqref{ass:orders} directly yields
 \begin{align}
  e_2 = O\left(\varepsilon^4 + \varepsilon^3 \Delta x\right).
 \end{align}
\end{remark}

Having bounded $e_2$, we continue by bounding $e_3$. 
\begin{lemma}
 Let $v_h^{n+1}$ be the Finite-Element solution to \eqref{eq:finite_element}, and let $\overline v^{n+1}$ be the solution to \eqref{eq:op_poisson_approx}. Then, 
 \begin{align}
  e_3 = \|\overline v^{n+1} - v_h^{n+1}\|_{L^2} = O\left(\frac{\varepsilon^6}{\Delta x^2} + \varepsilon^4 + \varepsilon^2 \Delta t^2\right).
 \end{align}
\begin{proof}
  We are using linear Finite-Elements on a symmetric problem, so one can use the Aubin-Nitsche trick (see, e.g., \cite{Ciarlet1978}). As it is crucial for our analysis that we get the correct dependency of the constant $\varepsilon$, we perform this 'trick' here explicitly. Let us define the dual solution $z$ and its Finite-Element approximation $z_h$ by
  \begin{align}
   a(z, \varphi) &= \int_{\Omega} \left(\overline v^{n+1}-v_h^{n+1} \right) \varphi \dx \quad \forall \varphi \in H_0^1(\Omega),  \\
   a(z_h, \varphi_h) &= \int_{\Omega} \left(\overline v^{n+1}-v_h^{n+1} \right) \varphi_h \dx \quad \forall \varphi_h \in V_h. 
  \end{align}
  One can conclude 
  \begin{align}
   \l2norm{\overline v^{n+1}-v_h^{n+1}}^2 
    &= a(z, \overline v^{n+1}-v_h^{n+1}) 
     = a(z-z_h, \overline v^{n+1}-v_h^{n+1}) \\
    &\leq M \|z-z_h\|_{H_0^1} \|\overline v^{n+1}-v_h^{n+1}\|_{H_0^1} \\
    &\leq M \Delta x^2 |z|_2 |\overline v^{n+1}|_2  \\
    &\leq C \frac{M \Delta x^2}{\gamma^2} \l2norm{\overline v^{n+1} -v_h^{n+1}} \| \iota_h \|_{{H_0^1}'}\\
    & = \l2norm{\overline v^{n+1} -v_h^{n+1}} \| \iota_h \|_{{H_0^1}'} O(\frac{\varepsilon^4}{\Delta x^2} + \varepsilon^2) \\
    & \leq \l2norm{\overline v^{n+1} -v_h^{n+1}} O(\varepsilon^2 + \Delta t^2) O(\frac{\varepsilon^4}{\Delta x^2} + \varepsilon^2) \\
    & \leq \l2norm{\overline v^{n+1} -v_h^{n+1}} O\left(\frac{\varepsilon^6}{\Delta x^2} + \varepsilon^4 + \varepsilon^2 \Delta t^2\right). 
  \end{align}
 $|\overline v|_2$ denotes the second Sobolev semi-norm. Considering an elliptic equation, it can be bounded by the right-hand side of the equation, if the ellipticity coefficient is unity. 
\end{proof}
 \end{lemma}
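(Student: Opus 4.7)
The plan is to use a duality (Aubin--Nitsche) argument, but with every constant tracked explicitly in $\varepsilon$ and $\Delta t$, since the ellipticity constant $\gamma=\Delta t^{2}(1-\varepsilon)^{2}/\varepsilon^{2}$ degenerates and a naive application would give a far too pessimistic bound. Since $\overline v^{n+1}-v_h^{n+1}\in H_0^1(\Omega)$, I would introduce the dual problem
\begin{align}
 a(z,\varphi) &= \int_{\Omega}(\overline v^{n+1}-v_h^{n+1})\,\varphi\,\dx \quad \forall\varphi\in H_0^1(\Omega),
\end{align}
together with its Galerkin approximation $z_h\in V_h$. Setting $\varphi=\overline v^{n+1}-v_h^{n+1}$ in the dual equation and using Galerkin orthogonality (which holds thanks to the fact that $v_h^{n+1}$ is the Galerkin projection of $\overline v^{n+1}$ in the bilinear form $a$) converts $\|\overline v^{n+1}-v_h^{n+1}\|_{L^2}^{2}$ into $a(z-z_h,\overline v^{n+1}-v_h^{n+1})$.

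Next I would apply boundedness of $a$ (with constant $M=\gamma+C_{PF}^{2}$, as identified in Thm.~\ref{thm:konditionszahl}) to obtain
\begin{align}
 \l2norm{\overline v^{n+1}-v_h^{n+1}}^{2} &\leq M\,\|z-z_h\|_{H_0^1}\,\|\overline v^{n+1}-v_h^{n+1}\|_{H_0^1}.
\end{align}
For each of the two $H_0^1$-factors I would invoke the standard interpolation estimate for linear finite elements on a one-dimensional mesh, giving $\|z-z_h\|_{H_0^1}\lesssim \Delta x\,|z|_{2}$ and an analogous bound for $\overline v^{n+1}-v_h^{n+1}$ via C\'ea's lemma applied at the $H^{1}$ level. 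The remaining task is to convert second-order Sobolev semi-norms back into data: elliptic regularity for the screened Poisson equation $-\gamma \varphi_{xx}+\varphi=\text{rhs}$ on an interval with homogeneous Dirichlet data gives $|\varphi|_{2}\lesssim \gamma^{-1}\|\text{rhs}\|_{L^2}$, which is why the factor $\gamma^{-2}$ appears.

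Putting the ingredients together yields
\begin{align}
 \l2norm{\overline v^{n+1}-v_h^{n+1}} &\lesssim \frac{M\,\Delta x^{2}}{\gamma^{2}}\,\|\iota_h\|_{{H_0^1}'},
\end{align}
after dividing one power of $\l2norm{\overline v^{n+1}-v_h^{n+1}}$ off. Now I would collect the $\varepsilon$-asymptotics: $M/\gamma^{2}=O(\varepsilon^{2}/\Delta t^{2})$ by Thm.~\ref{thm:konditionszahl}, $\Delta x^{2}/\Delta t^{2}=O(1)$ by Ass.~\ref{ass:orders}, and, combining Ass.~\ref{ass:v} with the bound on $\iota-\iota_h$ from Lem.~\ref{la:iota_diff}, one has $\|\iota_h\|_{{H_0^1}'}=O(\varepsilon^{2}+\Delta t^{2})$. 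Substituting gives exactly the claimed bound $O(\varepsilon^{6}/\Delta x^{2}+\varepsilon^{4}+\varepsilon^{2}\Delta t^{2})$.

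The main obstacle I anticipate is the estimate $\|\iota_h\|_{{H_0^1}'}=O(\varepsilon^{2}+\Delta t^{2})$: one must extract a power of $\varepsilon^{2}$ from the dominant piece of $\iota_h$, and this relies crucially on Ass.~\ref{ass:v}, namely that both $v^{n}$ and $u_x^{n}$ are of order $\varepsilon^{2}$ (and hence that the numerical flux derivatives $\widetilde v_x^n$, $\widetilde u_x^n$ inherit this scaling up to the consistency error $O(\varepsilon^{2}\Delta x)$ from the previous lemma). Without this structural input the bound degrades to $O(1)$ and the whole chain collapses. The remainder of the argument is routine once the $\varepsilon$-tracking is set up correctly.
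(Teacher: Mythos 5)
Your proposal is correct and follows essentially the same route as the paper: an Aubin--Nitsche duality argument with Galerkin orthogonality, the boundedness constant $M$, interpolation/C\'ea estimates producing $\Delta x^2\,|z|_2\,|\overline v^{n+1}|_2$, elliptic regularity giving the $\gamma^{-2}$ factor, and the data bound $\|\iota_h\|_{{H_0^1}'}=O(\varepsilon^2+\Delta t^2)$ via Assumption \ref{ass:v}. Your bookkeeping $M\Delta x^2/\gamma^2=O(\varepsilon^2)$ (using $\Delta t\geq\varepsilon$ and Ass.~\ref{ass:orders}) is in fact slightly sharper than the paper's intermediate $O(\varepsilon^4/\Delta x^2+\varepsilon^2)$, but it lands inside the same final estimate.
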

\begin{corollary}
 In a similar way, we can deduce that 
 \begin{align}
  \label{eq:vqermvhh1}
 \|\overline v^{n+1} - v_h^{n+1}\|_{H_0^1} = O\left(\frac{\varepsilon^6}{\Delta x^3} + \frac{\varepsilon^4}{\Delta x} + \varepsilon^2 \Delta t\right).
 \end{align}
\end{corollary}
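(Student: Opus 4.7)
The plan is to mimic the proof of the preceding $L^2$ estimate but without invoking the Aubin–Nitsche duality, since in the $H_0^1$ norm Céa's lemma already gives the required best-approximation property directly. The final bound should have exactly one fewer power of $\Delta x$ than the $L^2$ bound, which after imposing $\Delta t = \widehat{\cfl}\,\Delta x$ produces exactly the three terms claimed.

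First I would apply Galerkin orthogonality in the usual way: writing $I_h \overline v^{n+1} \in V_h$ for the linear nodal interpolant,
\begin{align*}
 \gamma\,\|\overline v^{n+1}-v_h^{n+1}\|_{H_0^1}^2 &\leq a(\overline v^{n+1}-v_h^{n+1},\overline v^{n+1}-v_h^{n+1}) \\
 &= a(\overline v^{n+1}-v_h^{n+1},\overline v^{n+1}-I_h\overline v^{n+1}) \\
 &\leq M\,\|\overline v^{n+1}-v_h^{n+1}\|_{H_0^1}\,\|\overline v^{n+1}-I_h\overline v^{n+1}\|_{H_0^1},
\end{align*}
which yields the Céa-type estimate $\|\overline v^{n+1}-v_h^{n+1}\|_{H_0^1}\leq (M/\gamma)\,\|\overline v^{n+1}-I_h\overline v^{n+1}\|_{H_0^1}$. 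Standard interpolation theory for linear Lagrange elements gives $\|\overline v^{n+1}-I_h \overline v^{n+1}\|_{H_0^1}\leq C\Delta x\,|\overline v^{n+1}|_2$, so overall
\begin{align*}
 \|\overline v^{n+1}-v_h^{n+1}\|_{H_0^1} \leq \frac{CM\Delta x}{\gamma}\,|\overline v^{n+1}|_2.
\end{align*}

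Second, I would reuse the same elliptic-regularity bound exploited in the previous lemma, namely $|\overline v^{n+1}|_2 \leq (C/\gamma)\,\|\iota_h\|_{H_0^1{}'}$. Inserting this gives
\begin{align*}
 \|\overline v^{n+1}-v_h^{n+1}\|_{H_0^1} \leq \frac{CM\,\Delta x}{\gamma^2}\,\|\iota_h\|_{H_0^1{}'},
\end{align*}
which is precisely the preceding $L^2$ bound divided by one power of $\Delta x$. This structural observation is the key shortcut: the reader has already seen all the ingredients, so no independent argument is needed.

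Finally, I would plug in the asymptotic orders. Since $\gamma^{-1}=O(\varepsilon^2/\Delta t^2)$ and (by Ass.~\ref{ass:v} applied to the pieces of $\iota_h$) $\|\iota_h\|_{H_0^1{}'}=O(\varepsilon^2+\Delta t^2)$, one gets $\tfrac{M\Delta x}{\gamma^2}=O(\varepsilon^4/\Delta x^3+\varepsilon^2/\Delta x)$ after using Ass.~\ref{ass:orders} to replace $\Delta t$ by $\widehat{\cfl}\,\Delta x$ where convenient. Multiplying out and collapsing the mixed term $\varepsilon^4\Delta t^2/\Delta x^3=O(\varepsilon^4/\Delta x)$ and $\varepsilon^2\Delta t^2/\Delta x=O(\varepsilon^2\Delta t)$ produces exactly the claimed asymptotics $O(\varepsilon^6/\Delta x^3+\varepsilon^4/\Delta x+\varepsilon^2\Delta t)$. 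The only subtle point is that the $H^1{}'$-bound on $\iota_h$ must be tracked carefully using the $\varepsilon^2$ smallness coming from $v^n$, $u_x^n$ and $v_x^n$, but this has already been established in Lemma~\ref{la:iota_diff} and its preceding discussion, so no new estimates are required.
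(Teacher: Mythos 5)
Your proof is correct and takes essentially the route the paper intends: the corollary follows from C\'ea's lemma together with the interpolation estimate $\|\overline v^{n+1}-I_h\overline v^{n+1}\|_{H_0^1}\le C\,\Delta x\,|\overline v^{n+1}|_2$ and the regularity bound $|\overline v^{n+1}|_2\le C\gamma^{-1}\|\iota_h\|_{{H_0^1}'}$, which are exactly the sub-steps already embedded in the Aubin--Nitsche argument of the preceding lemma, and your order bookkeeping (using $\gamma^{-1}=O(\varepsilon^2/\Delta t^2)$, $\|\iota_h\|_{{H_0^1}'}=O(\varepsilon^2+\Delta t^2)$ and Ass.~\ref{ass:orders}) reproduces the stated asymptotics. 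The only minor slip is attributional: the bound $\|\iota_h\|_{{H_0^1}'}=O(\varepsilon^2+\Delta t^2)$ is established in the proof of the preceding lemma via Ass.~\ref{ass:v}, not in Lemma~\ref{la:iota_diff}, which controls the difference $\|\iota-\iota_h\|_{{H_0^1}'}$.
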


We are now ready to prove the final theorem that assures that $v$ is approximated consistently. 
\begin{proof}[Of Thm. \ref{thm:consistency}] We can just collect previous results:

\begin{align}
  \|v(t^{n+1}) - v_h^{n+1}\|_{L^2} & \leq e_1 + e_2 + e_3 \\
				   &=  O(\varepsilon^2 \Delta t^2) + O\left(\varepsilon^4 + \varepsilon^3 \Delta x\right) + O\left(\frac{\varepsilon^6}{\Delta x^2} + \varepsilon^4 + \varepsilon^2 \Delta t^2\right) \\
				   &= O\left(\varepsilon^2 \Delta t^2 + \varepsilon^4 + \varepsilon^3 \Delta x + \frac{\varepsilon^6}{\Delta x^2}\right)
 \end{align}
\end{proof}

\begin{remark}
 Given that $\varepsilon \leq \Delta t$, one can see that $v_h^{n+1}$ is a consistent approximation to $v(t^{n+1})$, and $\|v(t^{n+1}) - v_h^{n+1}\|_{L^2} = O(\Delta t^4)$. 
\end{remark}

By now, we have shown that $v_h^{n+1}$ is a consistent approximation to $v(t^{n+1})$. It remains to show that also $u_h^{n+1}$ (see \eqref{eq:update_uh}) is a consistent approximation to $u(t^{n+1})$. 
\begin{theorem}\label{thm:consistency_u}
 Let $u_h^{n+1}$ be the approximate solution that is obtained using \eqref{eq:update_uh} with exact initial data $u^n \equiv u(t^n)$. Under Ass. \ref{ass:v} and \ref{ass:orders}, we have 
 \begin{align}
 \|u_h^{n+1} - u(t^{n+1})\|_{L^2} = O\left(\Delta t^2 + \frac{\varepsilon^4}{\Delta x^2} + \varepsilon^2\right).
 \end{align}
\begin{proof}
 We can directly compute, exploiting what we have already shown:
 \begin{align}
  \l2norm{u_h^{n+1}-u(t^{n+1})} 
  & \leq \l2norm{u^{n+1}-u(t^{n+1})} + \l2norm{u_h^{n+1} - u^{n+1}} \\
  & \leq O(\Delta t^2) + \l2norm{\frac{\Delta t}{\varepsilon} \left( v_x^n - \widetilde v_x^n \right)} + \l2norm{\Delta t \frac{1-\varepsilon}{\varepsilon^2} \left(v_h^{n+1} - v^{n+1}\right)_x} \\
  & \leq O(\Delta t^2) + \frac{\Delta t}{\varepsilon^2} \left(\h1norm{\overline v^{n+1} - v^{n+1}} + \h1norm{v_h^{n+1} - \overline v^{n+1}} \right) \\
  & \stackrel{eqs. \eqref{eq:vqermvh1}, \eqref{eq:vqermvhh1}}=  O(\Delta t^2) + O\left(\varepsilon^2 \Delta t + \varepsilon \Delta x^2 \right) + O\left(\frac{\varepsilon^4}{\Delta x^2} + \varepsilon^2 + \Delta t^2 \right) \\
  & = O\left(\Delta t^2 + \frac{\varepsilon^4}{\Delta x^2} + \varepsilon^2\right).
 \end{align}

\end{proof}
\end{theorem}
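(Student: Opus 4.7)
The plan is to follow exactly the same template as for $v$: split the error by triangle inequality into a purely temporal part and a purely spatial part, estimate each separately, and then combine using Ass. \ref{ass:orders} ($\Delta t = \widehat\cfl \Delta x$) to simplify.

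First I would compare $u(t^{n+1})$ with the exact IMEX update $u^{n+1}$ defined through \eqref{eq:u_update}. Substituting Taylor expansions of $u(t^{n+1})$ and $v_x^{n+1}$ around $t^n$ into \eqref{eq:imex2} and invoking Ass. \ref{ass:v} (so that $v_x^n, v_x^{n+1}, v_{tx}$ are all of order $\varepsilon^2$ and the $\varepsilon^{-2}$ prefactor is absorbed), the truncation error of \eqref{eq:imex2} turns out to be of order $\Delta t$, so after multiplying by $\Delta t$ one obtains $\|u^{n+1} - u(t^{n+1})\|_{L^2} = O(\Delta t^2)$, mirroring the $e_1$ estimate. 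This step is essentially a routine Taylor expansion.

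Next I would estimate the difference between $u_h^{n+1}$ (equation \eqref{eq:update_uh}) and $u^{n+1}$ (equation \eqref{eq:u_update}), both issued from the same exact data $u^n, v^n$ and the same source $g^n$. Subtracting gives
\begin{align*}
 u_h^{n+1} - u^{n+1} = \frac{\Delta t}{\varepsilon}\bigl(\widetilde v_x^n - v_x^n\bigr) + \frac{\Delta t(1-\varepsilon)}{\varepsilon^2}\bigl((v_h^{n+1})_x - v_x^{n+1}\bigr).
\end{align*}
The first term is bounded by $\tfrac{\Delta t}{\varepsilon}\cdot O(\varepsilon^2 \Delta x) = O(\varepsilon \Delta t \Delta x)$ using \eqref{eq:inequality_wx}. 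For the second term I would split via $\overline v^{n+1}$ and apply \eqref{eq:vqermvh1} and \eqref{eq:vqermvhh1}:
\begin{align*}
 \frac{\Delta t}{\varepsilon^2}\,\h1norm{v_h^{n+1}-v^{n+1}} &\leq \frac{\Delta t}{\varepsilon^2}\Bigl(\h1norm{\overline v^{n+1}-v^{n+1}} + \h1norm{v_h^{n+1}-\overline v^{n+1}}\Bigr) \\
 &= \frac{\Delta t}{\varepsilon^2}\cdot O\!\left(\varepsilon^4\frac{\Delta x}{\Delta t} + \varepsilon^3 \Delta x + \frac{\varepsilon^6}{\Delta x^3} + \frac{\varepsilon^4}{\Delta x} + \varepsilon^2 \Delta t\right).
\end{align*}
Using Ass.~\ref{ass:orders} to trade $\Delta t$ for $\Delta x$ (and vice versa), the dominant contributions simplify to $O(\Delta t^2 + \varepsilon^4/\Delta x^2 + \varepsilon^2)$. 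Adding this to the $O(\Delta t^2)$ temporal error yields the claimed bound.

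The main obstacle is bookkeeping of the $\varepsilon$ powers through the $\varepsilon^{-2}$ amplification: a naive bound on $\|(v_h^{n+1}-v^{n+1})_x\|_{L^2}$ would spoil the result, and one really has to exploit the full $H_0^1$-estimates \eqref{eq:vqermvh1}, \eqref{eq:vqermvhh1} which were set up precisely so that the $\varepsilon^2$ factors from Ass.~\ref{ass:v} cancel the $\varepsilon^{-2}$ from the update formula. Once the bookkeeping is done and Ass.~\ref{ass:orders} is used to convert mixed $\Delta x,\Delta t$ terms, the terms $\varepsilon^3 \Delta x$, $\varepsilon \Delta t \Delta x$ and $\varepsilon^4 \Delta t/\Delta x$ are absorbed by $\varepsilon^2$, leaving exactly $O(\Delta t^2 + \varepsilon^4/\Delta x^2 + \varepsilon^2)$.
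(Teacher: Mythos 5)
Your proposal is correct and takes essentially the same route as the paper: a triangle inequality through the semi-discrete update $u^{n+1}$, an $O(\Delta t^2)$ temporal truncation bound in which the $\varepsilon^{-2}$ prefactor is neutralized by Ass.~\ref{ass:v}, and then the subtraction of \eqref{eq:update_uh} from \eqref{eq:u_update} estimated via \eqref{eq:inequality_wx} together with the $H_0^1$-bounds \eqref{eq:vqermvh1} and \eqref{eq:vqermvhh1}, simplified with Ass.~\ref{ass:orders}. Your write-up is in fact slightly more explicit than the paper's in displaying the difference of the two update formulas and tracking which terms are absorbed into $O(\varepsilon^2+\Delta t^2)$.
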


There are a few remarks in order:
\begin{remark}
  The solution of the elliptic equation gets more and more difficult with decreasing time-step $\Delta t$, as the elliptic coefficient vanishes in this case. So basically, the method will only perform well as long as $\varepsilon \leq \Delta t$ (i.e., for the $\cfl$ number of the whole system there holds $\cfl \leq \varepsilon$), which is a reasonable assumption. (Otherwise, one would use explicit methods instead.) However, choosing $\Delta t= O(\varepsilon^{\frac{1}{p}})$ for some $p \geq 1$, one can observe that 
  \begin{align}
   \l2norm{w_h^{n+1} - w(t^{n+1})} = O\left(\Delta t^2 \right) .
  \end{align}
  This directly shows that the method works also for the $\varepsilon = 0$ case. 
\end{remark}
}

\section{Numerical Results}\label{sec:numerics}
{

We compare our scheme with an Implicit-Euler scheme, and an Implicit/Explicit scheme. Implicit-Euler scheme discretizes
\begin{align}
 \frac{w^{n+1}-w^n}{\dt} + f(w^{n+1}) &= G^{n+1}
\end{align}
using a Lax-Friedrichs flux. The naive Implicit/Explicit scheme proceeds in two steps, discretizing
\begin{align}
 \frac{\widehat  w^{n}- w^n}{\dt} + \widehat f(w^n)_x &= G^n
\end{align}
explicitly, and 
\begin{align}
 \frac{w^{n+1}-\widehat w^n}{\dt} + \widetilde f(w^{n+1})_x &= 0
\end{align}
implicitly, again both steps with Lax-Friedrichs flux.

\subsection{Smooth test case}
As a first, simple test case, we consider a smooth solution on domain $\Omega = [0, 1]$, given by
\begin{align}
 v(x,t)  &= \varepsilon^2 t \sin(2\pi x) \\
 u(x,t)  &= \sin(20\pi t) - \frac{\varepsilon^2}{2\pi} {\cos(2\pi x)}.
\end{align}
For all methods, we use a (stiff) $\cfl$ number of $\widetilde \cfl = \frac{0.8}{\varepsilon}$. Note that this corresponds to a $\cfl$ number of $\widehat {\cfl} = 0.8$ with respect to the non-stiff flux $\widehat f$. If a method is able to cope with such a $\cfl$ number, it is called uniformly asymptotically stable. 
In Fig. \ref{fig:simple_test case3}, convergence of the $l^2-$error at time $T = 0.1$ versus number of cells ($N_x$) is shown for all three methods under consideration. 
Note that there is an erratic behavior in the beginning for all three methods. This is due to poor mesh resolution of the initial data. The asymptotic regime seems to start at $N_x = 16$. 
One can observe that all three methods are stable for this unusally large $\cfl$ number, as expected. Furthermore, asymptotically (in $N_x$), all methods converge with order one toward the true solution $(u, v)$, except for the Implicit Euler scheme for $\varepsilon = 10^{-8}$. We suspect that this is because the linear system of equations to be solved in each time-slab is extremely ill-conditioned. We use Matlab's in-house exact solver for linear systems of equations, which actually yields a corresponding warning. 
Furthermore, $\varepsilon^2 
= 10^{-16}$ is 
close to machine zero. Note that this does not happen to the 
Asymptotic Preserving 
scheme, as its condition number is bounded for $\varepsilon \rightarrow 0$. 

Furthermore, we can observe that the bounds given in Thm. \ref{thm:consistency} and Thm. \ref{thm:consistency_u} are too pessimistic. We suspect that the Finite-Element method performs in this case better than theoretically predicted. 

The really surprising outcome of this research is that the AP scheme performs so much better than Implicit Euler and the mixed Implicit / Explicit scheme: Its error is up to four orders of magnitude smaller than that of the other two schemes.  We can only suspect that 'traditional' Finite-Volume schemes do not take advantage of the smooth behaviour of the solution as much as the Finite-Element method does.
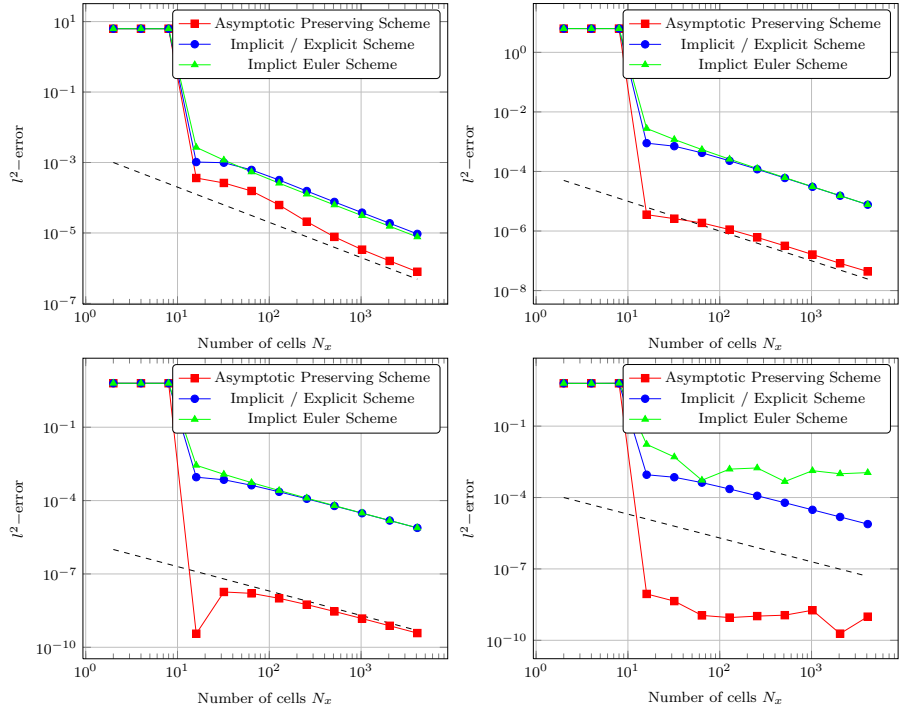
\begin{figure}[h]
	\begin{center}
	\begin{tikzpicture}[scale=0.70]
		\begin{loglogaxis}[xlabel={Number of cells $N_x$},ylabel={$l^2-$error},grid=major,legend style={at={(0.98,0.98)},anchor=north east,font=\footnotesize,rounded corners=2pt}]
		\addplot[color=red,   mark=square*]   table[x expr=(\thisrow{NE}), y expr =10^(\thisrow{eps1e1})] {AP_time.tex};
		\addplot[color=blue,  mark=*]         table[x expr=(\thisrow{NE}), y expr =10^(\thisrow{eps1e1})] {APO_time.tex};
		\addplot[color=green, mark=triangle*] table[x expr=(\thisrow{NE}), y expr =10^(\thisrow{eps1e1})] {IE_time.tex};
		\addplot[color=black, dashed] coordinates { (2,1e-3) (4096,0.488e-6) };
		\legend{Asymptotic Preserving Scheme, Implicit / Explicit Scheme, Implict Euler Scheme}
		\end{loglogaxis}
	\end{tikzpicture}
	\begin{tikzpicture}[scale=0.70]
		\begin{loglogaxis}[xlabel={Number of cells $N_x$},ylabel={$l^2-$error},grid=major,legend style={at={(0.98,0.98)},anchor=north east,font=\footnotesize,rounded corners=2pt}]
		\addplot[color=red,   mark=square*]   table[x expr=(\thisrow{NE}), y expr =10^(\thisrow{eps1e2})] {AP_time.tex};
		\addplot[color=blue,  mark=*]         table[x expr=(\thisrow{NE}), y expr =10^(\thisrow{eps1e2})] {APO_time.tex};
		\addplot[color=green, mark=triangle*] table[x expr=(\thisrow{NE}), y expr =10^(\thisrow{eps1e2})] {IE_time.tex};
		\addplot[color=black, dashed] coordinates { (2,5e-5) (4096,2.440e-8) };
		\legend{Asymptotic Preserving Scheme, Implicit / Explicit Scheme, Implict Euler Scheme}
		\end{loglogaxis}
	\end{tikzpicture}
	\begin{tikzpicture}[scale=0.70]
		\begin{loglogaxis}[xlabel={Number of cells $N_x$},ylabel={$l^2-$error},grid=major,legend style={at={(0.98,0.98)},anchor=north east,font=\footnotesize,rounded corners=2pt}]
		\addplot[color=red,   mark=square*]   table[x expr=(\thisrow{NE}), y expr =10^(\thisrow{eps1e4})] {AP_time.tex};
		\addplot[color=blue,  mark=*]         table[x expr=(\thisrow{NE}), y expr =10^(\thisrow{eps1e4})] {APO_time.tex};
		\addplot[color=green, mark=triangle*] table[x expr=(\thisrow{NE}), y expr =10^(\thisrow{eps1e4})] {IE_time.tex};
		\addplot[color=black, dashed] coordinates { (2,1e-6) (4096,0.488e-9) };
		\legend{Asymptotic Preserving Scheme, Implicit / Explicit Scheme, Implict Euler Scheme}
		\end{loglogaxis}
	\end{tikzpicture}
	\begin{tikzpicture}[scale=0.70]
		\begin{loglogaxis}[xlabel={Number of cells $N_x$},ylabel={$l^2-$error},grid=major,legend style={at={(0.98,0.98)},anchor=north east,font=\footnotesize,rounded corners=2pt}]
		\addplot[color=red,   mark=square*]   table[x expr=(\thisrow{NE}), y expr =10^(\thisrow{eps1e8})] {AP_time.tex};
		\addplot[color=blue,  mark=*]         table[x expr=(\thisrow{NE}), y expr =10^(\thisrow{eps1e8})] {APO_time.tex};
		\addplot[color=green, mark=triangle*] table[x expr=(\thisrow{NE}), y expr =10^(\thisrow{eps1e8})] {IE_time.tex};
		\addplot[color=black, dashed] coordinates { (2,1e-4) (4096,0.488e-7) };
		\legend{Asymptotic Preserving Scheme, Implicit / Explicit Scheme, Implict Euler Scheme}
		\end{loglogaxis}
	\end{tikzpicture}
	\caption{Convergence results for the smooth test case. In dependency on $\varepsilon$, $\cfl$ was set to  $\cfl = \frac{0.8}{\varepsilon}$. Left to right, top to bottom: $\varepsilon = \{10^{-1},10^{-2},10^{-4},10^{-8} \}$. Dashed line indicates first-order convergence.}\label{fig:simple_test case3}
	\end{center}
\end{figure}

%

\subsection{Testcase with a kink}
To assess whether the good performance of the asymptotic preserving method is due to the smoothness of the solution, we perform a numerical study on a test case with a kink, more precisely, we consider again domain $\Omega = [0, 1]$ and the solution 
\begin{align}
 v(x,t)  &= {\hphantom{1 + }}\varepsilon^2 t \begin{cases} x &\quad x < 0.5 \\ -x + 1 &\quad x \geq 0.5 \end{cases} \\
 u(x,t)  &= 1 + \varepsilon^2 \begin{cases} \frac{x^2}{2} &\quad x < 0.5 \\ -\frac{x^2}{2} + x - \frac14 &\quad x \geq 0.5 \end{cases}.
\end{align}
Again, we use a (stiff) $\cfl$ number of $\widetilde \cfl = \frac{0.8}{\varepsilon}$. In Fig. \ref{fig:simple_test case2}, convergence of the $l^2-$ norm at time $T = 0.1$ versus $N_x$ is plotted. 
One can observe that the schemes converge with order one up to $10^{-10}$, which is about machine zero (note that the error has to be scaled with $\varepsilon^2$), except for the $\varepsilon = 10^{-8}$, where Implicit Euler fails to converge for this large $\cfl$ number. 
For large values of $\varepsilon$, the schemes nearly perform equally well, while, for $\varepsilon= 10^{-4}$, the AP scheme really performs better by orders of magnitude. For $\varepsilon = 10^{-8}$, both the AP and Implicit / Explicit scheme perform about equally well. Nevertheless, as $\varepsilon^2 = 10^{-16}$ is close to machine zero, these results are not too reliable. 
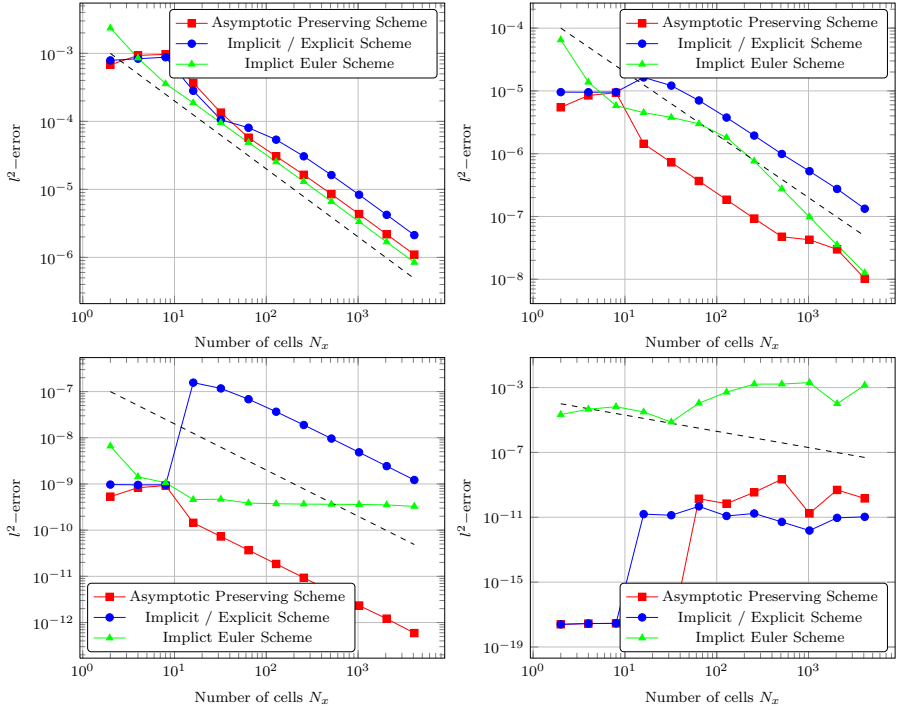
\begin{figure}[h]
	\begin{center}
	\begin{tikzpicture}[scale=0.70]
		\begin{loglogaxis}[xlabel={Number of cells $N_x$},ylabel={$l^2-$error},grid=major,legend style={at={(0.98,0.98)},anchor=north east,font=\footnotesize,rounded corners=2pt}]
		\addplot[color=red,   mark=square*]   table[x expr=(\thisrow{NE}), y expr =10^(\thisrow{eps1e1})] {AP_kink.tex};
		\addplot[color=blue,  mark=*]         table[x expr=(\thisrow{NE}), y expr =10^(\thisrow{eps1e1})] {APO_kink.tex};
		\addplot[color=green, mark=triangle*] table[x expr=(\thisrow{NE}), y expr =10^(\thisrow{eps1e1})] {IE_kink.tex};
		\addplot[color=black, dashed] coordinates { (2,1e-3) (4096,0.488e-6) };
		\legend{Asymptotic Preserving Scheme, Implicit / Explicit Scheme, Implict Euler Scheme}
		\end{loglogaxis}
	\end{tikzpicture}
	\begin{tikzpicture}[scale=0.70]
		\begin{loglogaxis}[xlabel={Number of cells $N_x$},ylabel={$l^2-$error},grid=major,legend style={at={(0.98,0.98)},anchor=north east,font=\footnotesize,rounded corners=2pt}]
		\addplot[color=red,   mark=square*]   table[x expr=(\thisrow{NE}), y expr =10^(\thisrow{eps1e2})] {AP_kink.tex};
		\addplot[color=blue,  mark=*]         table[x expr=(\thisrow{NE}), y expr =10^(\thisrow{eps1e2})] {APO_kink.tex};
		\addplot[color=green, mark=triangle*] table[x expr=(\thisrow{NE}), y expr =10^(\thisrow{eps1e2})] {IE_kink.tex};
		\addplot[color=black, dashed] coordinates { (2,1e-4) (4096,0.488e-7) };
		\legend{Asymptotic Preserving Scheme, Implicit / Explicit Scheme, Implict Euler Scheme}
		\end{loglogaxis}
	\end{tikzpicture}
	\begin{tikzpicture}[scale=0.70]
		\begin{loglogaxis}[xlabel={Number of cells $N_x$},ylabel={$l^2-$error},grid=major,legend style={at={(0.02,0.02)},anchor=south west,font=\footnotesize,rounded corners=2pt}]
		\addplot[color=red,   mark=square*]   table[x expr=(\thisrow{NE}), y expr =10^(\thisrow{eps1e4})] {AP_kink.tex};
		\addplot[color=blue,  mark=*]         table[x expr=(\thisrow{NE}), y expr =10^(\thisrow{eps1e4})] {APO_kink.tex};
		\addplot[color=green, mark=triangle*] table[x expr=(\thisrow{NE}), y expr =10^(\thisrow{eps1e4})] {IE_kink.tex};
		\addplot[color=black, dashed] coordinates { (2,1e-7) (4096,0.488e-10) };
		\legend{Asymptotic Preserving Scheme, Implicit / Explicit Scheme, Implict Euler Scheme}
		\end{loglogaxis}
	\end{tikzpicture}
	\begin{tikzpicture}[scale=0.70]
		\begin{loglogaxis}[xlabel={Number of cells $N_x$},ylabel={$l^2-$error},grid=major,legend style={at={(0.98,0.02)},anchor=south east,font=\footnotesize,rounded corners=2pt}]
		\addplot[color=red,   mark=square*]   table[x expr=(\thisrow{NE}), y expr =10^(\thisrow{eps1e8})] {AP_kink.tex};
		\addplot[color=blue,  mark=*]         table[x expr=(\thisrow{NE}), y expr =10^(\thisrow{eps1e8})] {APO_kink.tex};
		\addplot[color=green, mark=triangle*] table[x expr=(\thisrow{NE}), y expr =10^(\thisrow{eps1e8})] {IE_kink.tex};
		\addplot[color=black, dashed] coordinates { (2,1e-4) (4096,0.488e-7) };
		\legend{Asymptotic Preserving Scheme, Implicit / Explicit Scheme, Implict Euler Scheme}
		\end{loglogaxis}
	\end{tikzpicture}
	\caption{Convergence results for the test case with a kink. In dependency on $\varepsilon$, $\cfl$ was set to  $\cfl = \frac{0.8}{\varepsilon}$. Left to right, top to bottom: $\varepsilon = \{10^{-1},10^{-2},10^{-4},10^{-8} \}$. Dashed line indicates first-order convergence.}\label{fig:simple_test case2}
	\end{center}
\end{figure}

%
}

\section{Conclusions and Outlook}\label{sec:outlook}
We have compared the recently developed AP schemes versus more traditional Finite-Volume schemes for the $p-$system. It was demonstrated that the AP schemes outperform both Implicit Euler and an Implicit / Explicit scheme by orders of magnitude if there is a small parameter $\varepsilon$. 

We are interested in the use of high-order methods, also in the context of asymptotic preserving schemes. In particular, our interest lies in the use of Discontinuous Galerkin method \cite{DG2,ABCM}. Future work will therefore treat an asymptotic preserving discontinuous Galerkin scheme applied to \eqref{eq:p-system1}-\eqref{eq:p_system2} for various orders of consistency, and also compare performance of the AP schemes versus Diagonally-Implicit-Runge-Kutta (DIRK) \cite{JS13}. 
It is to be expected that the high order of consistency will reduce the effect that we could observe in this publication. Nevertheless, the use of AP schemes has some inherent advantages, such as the occurence of an elliptic equation, which is generally easier to solve than a hyperbolic problem. To conclude, we are positive that there will still be a benefit of using AP schemes.

\section*{Acknowledgement} I am thankful for fruitful discussions with Sebastian Noelle. 
Furthermore, I highly appreciate the careful reading and critical annotations from the anonymous reviewer which really helped me to improve the presentation in this paper.

\bibliographystyle{spmpsci}      

\begin{thebibliography}{}
\providecommand{\url}[1]{{#1}}
\providecommand{\urlprefix}{URL }
\expandafter\ifx\csname urlstyle\endcsname\relax
  \providecommand{\doi}[1]{DOI~\discretionary{}{}{}#1}\else
  \providecommand{\doi}{DOI~\discretionary{}{}{}\begingroup
  \urlstyle{rm}\Url}\fi

\end{thebibliography}


\begin{thebibliography}{10}
\providecommand{\url}[1]{{#1}}
\providecommand{\urlprefix}{URL }
\expandafter\ifx\csname urlstyle\endcsname\relax
  \providecommand{\doi}[1]{DOI~\discretionary{}{}{}#1}\else
  \providecommand{\doi}{DOI~\discretionary{}{}{}\begingroup
  \urlstyle{rm}\Url}\fi

\bibitem{ABCM}
Arnold, D.N., Brezzi, F., Cockburn, B., Marini, L.D.: Unified analysis of
  {Discontinuous Galerkin} methods for elliptic problems.
\newblock SIAM J. Numer. Anal. \textbf{39}, 1749--1779 (2002)

\bibitem{ArNo12}
Arun, K., Noelle, S.: An asymptotic preserving scheme for low froude number
  shallow flows.
\newblock IGPM Preprint 352  (2012)

\bibitem{ArNoLuMu12}
Arun, K., Noelle, S., Lukacova-Medvidova, M., Munz, C.D.: An asymptotic
  preserving all mach number scheme for the euler equations of gas dynamics.
\newblock IGPM Preprint 348  (2012)

\bibitem{BroHugh82}
Brooks, A.N., Hughes, T.J.R.: Streamline upwind/petrov-galerkin formulations
  for convection-dominated flows with particular emphasis on the incompressible
  navier-stokes equations.
\newblock Computer Methods in Applied Mechanics and Engineering \textbf{32},
  199--259 (1982)

\bibitem{Ciarlet1978}
Ciarlet, P.G.: The {Finite Element} Method for Elliptic Problems.
\newblock North-Holland, Amsterdam, New York, Oxford (1978)

\bibitem{DG2}
Cockburn, B., Shu, C.W.: {TVB Runge-Kutta} local projection {Discontinuous
  Galerkin} finite element method for {Conservation Laws} {II}: General
  framework.
\newblock Mathematics of Computation \textbf{52}, 411--435 (1988)

\bibitem{KC}
Cole, J.D., Kevorkian, J.: Perturbation Methods in Applied Mathematics.
\newblock Springer Berlin / Heidelberg / New York (1981)

\bibitem{CoDeKu12}
Cordier, F., Degond, P., Kumbaro, A.: An asymptotic-preserving all-speed scheme
  for the euler and navier-stokes equations.
\newblock Journal of Computational Physics \textbf{231}, 5685--5704 (2012)

\bibitem{DA}
Dafermos, C.M.: Hyperbolic {Conservation Laws} in {Continuum Physics}.
\newblock Springer Berlin / Heidelberg (2005)

\bibitem{DegLoNaNe12}
Degond, P., Lozinski, A., Narski, J., Negulescu, C.: An asymptotic-preserving
  method for highly anisotropic elliptic equations based on a micro-macro
  decomposition.
\newblock Journal of Computational Physics \textbf{231}, 2724--2740 (2012)

\bibitem{DeTa}
Degond, P., Tang, M.: All speed scheme for the low mach number limit of the
  isentropic euler equation.
\newblock Commun. Comput. Phys. \textbf{10}, 1--31 (2011)

\bibitem{GrRo}
Grossmann, C., Roos, H.G.: Numerical Treatment of Partial Differential
  Equations.
\newblock Springer Berlin / Heidelberg (2007)

\bibitem{JS13}
Jaust, A., Sch\"utz, J.: A temporally adaptive hybridized discontinuous
  galerkin method for instationary compressible flows.
\newblock Tech. rep., IGPM (2013).
\newblock Submitted to Computers \& Fluids on 07/23/2013

\bibitem{Jin99}
Jin, S.: Efficient asymptotic-preserving (ap) schemes for some multiscale
  kinetic equations.
\newblock SIAM J. Sci. Comput. \textbf{21}, 441--454 (1999)

\bibitem{Jin2012}
Jin, S.: Asymptotic preserving (ap) schemes for multiscale kinetic and
  hyperbolic equations: A review.
\newblock Riv. Mat. Univ. Parma \textbf{3}, 177--216 (2012)

\bibitem{Jin1998}
Jin, S., Pareschi, L., Toscani, G.: Diffusive relaxation schemes for multiscale
  discrete-velocity kinetic equations.
\newblock SIAM J. Numer. Anal \textbf{35}, 2405--2439 (1998)

\bibitem{KlMa81}
Klainerman, S., Majda, A.: Singular limits of quasilinear hyperbolic systems
  with large parameters and the incompressible limit of compressible fluids.
\newblock Comm. Pure Appl. Math. \textbf{34}, 481--524 (1981)

\bibitem{Kroener}
Kr\"oner, D.: Numerical Schemes for Conservation Laws.
\newblock Wiley Teubner (1997)

\end{thebibliography}

\end{document}